\title{On the center of the Hurwitz graph}
\author{Yuval Hovannes Khachatryan}
\theoremstyle{plain} \newtheorem{theorem}{Theorem}[subsection]
\theoremstyle{definition}
\newtheorem*{theorem*}{Theorem}
\theoremstyle{definition} \newtheorem{definition}[theorem]{Definition}
\theoremstyle{definition} \newtheorem{remark}[theorem]{Remark}
\theoremstyle{definition} \newtheorem{lemma}[theorem]{Lemma}
\theoremstyle{definition} 
\theoremstyle{definition} \newtheorem{observation}[theorem]{Observation}
\theoremstyle{definition} 
\theoremstyle{definition} 
\begin{document}

\maketitle
\tableofcontents{}

\section{Introduction}
\subsection{General}
In this work we explore the connection between the Huritz graph and geometric tree graphs and find central elements in the Hurwitz graph.

\section{Preliminaries}
\subsection{Words and the Hurwitz graph}
This work deals with several types of graphs which have a common vertex set,
namely the set of all maximal chains in the non-crossing partition lattice. 

\begin{definition}
Let $G$ be a group, and $S$ a generating set of $G$. Let $g\in G$. 
Then $\left(s_1,\dots  ,s_k \right)\in S^k$ is called \emph{reduced word} of $g$ in terms of $S$ if 
$g = s_1\cdots s_k$ and this is a shortest possible representation of $g$.

\end{definition}

The non-crossing partition lattice was first introduced by Kreweras
\cite{Kreweras}, and may be defined as follows.

\begin{definition}Let $T$ be the set of all transpositions in the symmetric group $S_{n}$ and
let $l(\cdot)$ be the corresponding length function: $l(\pi)$ is
the minimal number of factors in an expression of $\pi$ as a product
of transpositions. Let $c=\left(1,\dots  ,n\right)\in S_n$ be a cycle of length
$n.$ The \emph{non-crossing partition lattice}, $NC(n)$ is the set
$$NC(n) \coloneqq \left\{ \pi\in S_{n}:l(\pi)+l\left(\pi^{-1}c\right)=l\left(c\right)\right\}$$
ordered by
$$\pi\leq\sigma\Leftrightarrow l\left(\pi\right)+l(\pi^{-1}\sigma)=l\left(\sigma\right).$$
\end{definition}

\begin{remark}
Every permutation can be represented as a product of transpositions.
Usually, such a representation is not unique. Having $\pi\leq\sigma$ means
that there exist words $w_{\pi}$ and $w_{\sigma},$ over the alphabet $T$, such that:
\begin{enumerate}
\item $w_{\pi}$ and $w_{\sigma}$ are reduced words in terms of $T$
of $\pi$ and $\sigma$ respectively.
\item $w_{\pi}$ is a prefix of $w_{\sigma}.$
\end{enumerate}
\end{remark}

\begin{definition}
Denote by $F_{n}$ the set of all \emph{maximal chains} in the non-crossing partition
lattice $NC\left(n\right)$.
\end{definition}

It is not difficult to see that there is a bijection between $F_{n}$ 
and the set of all factorizations of $c$ into $l\left(c\right)=n-1$ transpositions.
We shall write each factorization as a word $\left(t_{1},\dots  ,t_{n-1}\right)$ in the alphabet
of transpositions.

Now we proceed to define one of our main objects of study, the Hurwitz graph. In order to do that, we first define two actions on the words 
in $F_n$.
\begin{definition}\label{Hurwitz_Action}
For each $1\leq i\leq n-2$ and $v=\left(t_{1},\dots  ,t_{n-1}\right)\in F_{n}$,
let
\[
R_{i}(v):=\left(t_{1},\dots  ,t_{i-1},t_{i+1}^{t_{i}},t_{i},t_{i+2},\dots  ,t_{n-1}\right)
\]

and
\[
L_{i}(v):=\left(t_{1},\dots  ,t_{i-1},t_{i+1},t_{i}^{t_{i+1}},t_{i+2},\dots  ,t_{n-1}\right).
\]
using the notation $g^{h}:=h^{-1}gh$.
$R_{i}$ and $L_{i}$ are called right and left \emph{Hurwitz
action} on $F_{n}$, respectively. Clearly, $L_{i}=R_{i}^{-1}$.
\end{definition}
Now, we proceed to define the Hurwitz graph.
\begin{definition}\label{HurwitzGraph}
The \emph{Hurwitz graph} $\mathcal{H}\left(S_n\right)$ is the graph
whose vertex set is $F_{n}$ and two vertices are adjacent if
one is obtained from the other by applying a right or left Hurwitz action. 
\end{definition}
Some of the properties of the Hurwitz graph, such as its radius and an upper bound on its diameter, 
were established in \cite{HurwitzGraph}.

We shall use the following map $\Phi$ from $F_n$ to $S_{n-1}$, also defined in \cite{HurwitzGraph}.
\begin{definition}
For $w=\left(t_1,\dots  ,t_{n-1}\right)\in F_n$ define the partial products 
$$\sigma_j \coloneqq  t_j\dots   t_{n-1} \qquad (1\leq j \leq n-1)$$
and the empty product 
$$\sigma_n \coloneqq id.$$
\end{definition}
By definition, $\sigma_j = t_j\sigma_{j+1}$ for every $1 \leq j \leq n-1$. Thus $\sigma_j(i) \neq \sigma_{j+1}(i)$ 
for exactly two values of $1 \leq i \leq n$, and $\sigma_j(i) > \sigma_{j+1}(i)$ for exactly one such value. 
Ignoring $i=n$, the set 
$$A_j \coloneqq \{1 \leq i \leq n-1 | \sigma_j(i) > \sigma_{j+1}(i)\}$$
thus satisfies 
\begin{equation}\label{phieq-1}
\left|A_j\right| \leq 1 \qquad (1 \leq j \leq n-1).
\end{equation}
On the other hand, for $1 \leq i \leq n-1$ for $\sigma_1(i) = c(i) = i+1$ while $\sigma_n(i) = i$ so that 
$i \in \bigcup_{j=1}^{n-1}A_j$. It follows that 
\begin{equation}\label{phieq-2}
\left| \bigcup_{j=1}^{n-1}A_j \right| \geq n-1.
\end{equation}
Combining \eqref{phieq-1} and \eqref{phieq-2} one concludes that 
$$\left| \bigcup_{j=1}^{n-1}A_j \right| = n-1$$
and 
$$\left|A_j\right| = 1 \qquad (1 \leq j \leq n-1).$$
It follows that the map $\pi_w$ defined by 
$$\pi_w(j) \coloneqq i \quad \text{if} \quad A_j = \left\{i\right\}$$
is a permutation in $S_{n-1}$.

We now define a map from $F_n$ to $S_{n-1}$.
\begin{definition}\label{Defn-Phi}
Define $\Phi : F_n \rightarrow S_{n-1}$ by 
$$\Phi(w) \coloneqq \pi_w \qquad \left(\forall w\in F_n \right).$$
\end{definition}

\subsection{Geometric graphs}\label{geometric-graphs}

A \emph{geometric graph} is a graph in which the vertices or the edges are associated with geometric objects.
In this work, all the geometric graphs will have their vertices interpreted as finite sets of points
in the Euclidean plane and the edges as straight lines connecting them. 
.
\begin{definition}
 A \emph{non-crossing geometric tree} of order $n$ is a tree whose vertices form a set of $n$ points in 
 convex position in the plane, and any two edges may intersect only in a common vertex.
\end{definition}
\begin{definition}
For every $n$, the \emph{geometric tree graph} $\mathcal{G}_n$, is the graph
which has as vertices all the geometric trees of order $n$ on a given set of vertices 
(in convex position), and there is an edge
between trees $T$ and $S$ if there exist edges $e$ of $T$ and $f$ of $S$ such that $S = T - e + f$, 
namely $S$ is obtained by removing $e$ from $T$ and adding $f$.
Geometric tree graphs were studied by Hernando, Hurtado, Marquez, Mora and Noy \cite{HHMM}. 
\end{definition}

There is a natural mapping from words in transpositions in $S_n$ to geometric graphs.
\begin{definition} \label{Defn-G}
We define the \emph{geometric tree map} $\Gamma$ from the set of all words in transpositions in $S_n$ 
to the set of all geometric graphs, as following.
For a sequence $w=\left(\left(a_{1},b_{1}\right),\dots  ,\left(a_{k},b_{k}\right)\right)$ of 
transpositions in $S_n$, let $\Gamma\left(w\right)$ be the geometric graph whose vertices are 
$\left\{ 1,\dots  ,n\right\} $ drawn clockwise on a circle, and whose edges are 
$\left\{ \left(a_{i},b_{i}\right):1\leq i\leq k\right\}.$
\end{definition}

\begin{remark}
From now on, for every geometric graph $G = (V, E)$, it is assumed that the vertex set $V$ is a
subset of ${1,\dots,n}$ drawn clockwise on a circle. 

\end{remark}
\begin{remark}\label{edge-notation}
In this work, if a graph and has an edge with endpoints  $a$ and $b$, then it will be denoted as $\left(a \ b\right)$ and not as $ab$ or $\{a, b\}$.
It does not mean that the graph is directed, but rather that the edge corresponds to a transposition $(a\ b)$.
\end{remark}

\begin{definition}\label{BoundaryCatDefinition}
A \emph{boundary caterpillar} $C$ is a non-crossing geometric tree which is a caterpillar and its spine
is either a single vertex or a path of the form $\left(i\ i+1\right), \dots   ,\left( i+k-1 \ i+k \right)$.
\end{definition}

\subsection{Geometric tree order}
\begin{remark}
From now on, $[n]$ will denote the set $\{1,\dots  ,n\}$.
\end{remark}

\begin{definition}\label{cyclic-order}
 Let $k\in \left[n\right]$. We define $<_k$  to be the linear order on $[n]$ defined by 
 $k <_k k+1 <_k\dots<_k n <_k 1 <_k. \dots   <_k k-1$.
\end{definition}

The following theorem, due to Goulden and Yong \cite{GY}, gives a connection between words in $F_n$
and geometric trees.
\begin{theorem}[\cite{GY}, Theorem 2.2]\label{GouldenYongTheorem}
 A word $\left(t_1,\dots  ,t_{n-1}\right)\in T^{n-1}$ (i.e. a sequence of $n-1$ transpositions in $S_n$)
 belongs to $F_n$ if and only if the following
 two properties hold.
\begin{enumerate}
\item $\Gamma(w)$ is a non-crossing geometric tree.
\item $\Gamma(w)$ has cyclically decreasing neighbors: For every $1\leq a \leq n$ and 
$1\leq i < j \leq n-1,$
if $t_i = \left(a, c\right)$ and $t_j = \left(a, b\right)$ then $b<_a c$.
\end{enumerate}
\end{theorem}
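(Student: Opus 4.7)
I would proceed by induction on $n$, the base case $n=2$ being immediate since $F_2 = \{((1,2))\}$ and $\Gamma((1,2))$ is a single edge. For the inductive step, write $w = (t_1, \dots, t_{n-1})$ with $t_1 = (a,b)$, $a < b$. The essential calculation is that $t_1 c$ is a product of two disjoint cycles $c_1, c_2$ whose supports are the complementary arcs $\{a, a+1, \dots, b-1\}$ and $\{b, b+1, \dots, n, 1, \dots, a-1\}$, each acting as the cyclic shift on its arc. Since $l(c_1)+l(c_2) = (b-a-1)+(n-b+a-1) = n-2 = l(c) - 1$, the reduced factorizations of $c$ starting with $t_1$ are in bijection with shuffles of reduced factorizations of $c_1$ and of $c_2$: no bridging transposition may appear, because a transposition merging two disjoint cycles strictly increases the length.

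Applying the inductive hypothesis to each sub-factorization (after renumbering each arc in its natural cyclic order) gives non-crossing trees on the two arcs with cyclically decreasing neighbors. Reassembling these with the chord $(a,b)$ yields a graph on $[n]$ with $n-1$ edges that is a tree (the chord joins two disjoint sub-trees) and non-crossing (the sub-trees sit in complementary arcs and $(a,b)$ is their shared boundary chord). For the cyclically decreasing condition at any interior vertex $v$ of an arc, the restriction of $<_v$ on $[n]$ to the arc agrees with the cyclic order used by induction, so the condition transports. Conversely, starting from a word $w$ such that $\Gamma(w)$ satisfies both properties, the chord $t_1 = (a,b)$ splits the other edges into two sub-trees on the arcs by non-crossingness; the subwords restricted to each arc still satisfy both properties; and induction gives that they multiply to $c_1$ and $c_2$, whence $t_1 \cdots t_{n-1} = t_1(c_1 c_2) = t_1(t_1 c) = c$.

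The main obstacle is the careful verification of the cyclically decreasing condition at the two special vertices $a$ and $b$ during the split. One must check that placing $t_1 = (a,b)$ at position $1$ is compatible with the rule that an earlier edge incident to $a$ has the larger endpoint under $<_a$; this reduces to the observation that every other neighbor of $a$ in $\Gamma(w)$ lies strictly inside the arc $\{a, a+1, \dots, b-1\}$ and therefore satisfies $c <_a b$, with a symmetric verification at $b$. Dually, in the converse direction one needs that the induced cyclic order on each arc is exactly the restriction of the global cyclic order, so that the inductive hypothesis can be applied verbatim. Once this bookkeeping is set up cleanly, both directions of the induction close without further difficulty.
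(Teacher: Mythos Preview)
The paper does not prove this theorem: it is quoted verbatim as \cite{GY}, Theorem~2.2, and used as a black box throughout. There is therefore no ``paper's own proof'' to compare your proposal against.

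Your inductive argument is nonetheless sound and is in fact close to the original Goulden--Yong proof. The key structural observation---that left-multiplying the long cycle $c$ by $t_1=(a,b)$ splits it into two disjoint cycles supported on the complementary arcs $\{a,\dots,b-1\}$ and $\{b,\dots,a-1\}$, so that a minimal factorization of $c$ beginning with $t_1$ is exactly a shuffle of minimal factorizations of these two shorter cycles---is the heart of the matter, and you have it right. The bookkeeping you flag at $a$ and $b$ does work: in the forward direction each later $t_i$ must lie entirely within one arc (else it would merge cycles and overshoot the length bound), so every other neighbour of $a$ lies in $\{a+1,\dots,b-1\}$ and is automatically $<_a b$; in the converse direction the cyclically decreasing condition at $a$ and $b$ forces exactly this arc separation, and non-crossingness then guarantees the two components of $\Gamma(w)\setminus(a,b)$ are precisely the two arcs, so induction applies. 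One small point worth making explicit in a full write-up is why the restriction of $<_v$ to an arc coincides with the intrinsic cyclic order on that arc after relabelling---this is immediate but should be stated.
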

\begin{remark}
In the rest of this text, all the geometric trees will be assumed to be non-crossing.
\end{remark}

This theorem allows to define a partial order on the edges of a geometric tree $T$ of order $n$.
\begin{definition}\label{TreePreOrder}
Let $T$ be a (non-crossing) geometric tree. Define a relation $\prec_T$ on the edges of $T$ as follows:
For two adjacent edges $(i\ j)$ and $(i\ k)$, define $\left(i\ j\right) \prec_T \left(i\ k\right)$ if 
$k <_i j$.
\end{definition}
\begin{remark}
The relation $\prec_T$ is antisymmetric.
\end{remark}
\begin{definition} \label{TreeOrderDefinition}
Let $T$ be a (non-crossing) geometric tree. Define the relation $<_T$ on the edges of $T$ as
the transitive closure of $\prec_T$, namely:
for two distinct edges $e$ and $f$, $e <_T f$ if for the unique path $e = e_1, \dots  , e_m = f$, that connects $e$ to $f$ in $T$ (see Remark \ref{Unique-path}) 
we have $e_k \prec_T e_{k+1}$ for every $1 \leq k < m$. 
\end{definition}
The following fact is well known.
\begin{observation}\label{TransitiveClosureProperty}
Let $R$ be an antisymmetric relation on a set $S$ such that for every $x,y\in S$ there is at most 
one finite sequence $x = a_0,\dots, a_n = y$ 
such that $a_{i-1} R a_i$ for every 
$1 \leq i \leq n$.
Then the transitive closure $\bar{R}$ of $R$ is
antisymmetric.
\end{observation}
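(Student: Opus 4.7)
The plan is to proceed by contradiction, using the chain-uniqueness hypothesis to rule out $\bar{R}$-cycles between distinct points.

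First I would suppose that $\bar{R}$ is not antisymmetric, so there exist distinct $x, y \in S$ with $x\,\bar{R}\,y$ and $y\,\bar{R}\,x$. By definition of the transitive closure, this produces two $R$-chains, $C_1 = (x = a_0, a_1, \ldots, a_n = y)$ of length $n \geq 1$ and $C_2 = (y = b_0, b_1, \ldots, b_m = x)$ of length $m \geq 1$, with each consecutive pair $R$-related.

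The key step is to manufacture a second $R$-chain from $x$ to $y$ by threading $C_2$ between two copies of $C_1$: the concatenation
$(x, a_1, \ldots, a_n, b_1, \ldots, b_m, a_1, \ldots, a_n)$
is again an $R$-chain from $x$ to $y$, because each newly adjacent pair is either already adjacent in $C_1$ or in $C_2$, or consists of the shared endpoints at the joins ($a_n = y = b_0$, $b_m = x = a_0$). Its length $2n+m$ strictly exceeds $n$, so it is a different finite sequence from $C_1$. This exhibits two distinct $R$-chains from $x$ to $y$, contradicting the uniqueness hypothesis.

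I do not foresee a substantive obstacle: the verification reduces to checking that the concatenated sequence is genuinely an $R$-chain and that it differs from $C_1$, both of which follow by inspection. It is worth noting, as a sanity check, that antisymmetry of $R$ itself is not actually invoked in the argument — the uniqueness of $R$-chains already implies the antisymmetry of $R$ (take $n = m = 1$) and, through the concatenation trick, the antisymmetry of $\bar{R}$.
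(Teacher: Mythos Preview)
Your argument is correct. The paper does not actually prove this observation; it records it as a well-known fact and then invokes it in the proof of Theorem~\ref{TreeOrderTheorem}. Your concatenation trick---threading $C_2$ between two copies of $C_1$ to produce a second, strictly longer $R$-chain from $x$ to $y$---is a clean way to verify it, and your side remark that the antisymmetry hypothesis on $R$ is redundant (being a consequence of chain uniqueness) is also accurate.
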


\begin{theorem}\label{TreeOrderTheorem}
For every geometric tree $T$, $<_T$ is a partial order on the edges of $T$.
\end{theorem}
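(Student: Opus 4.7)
The plan is to apply Observation \ref{TransitiveClosureProperty} to $R = \prec_T$. The relation $\prec_T$ is antisymmetric (as noted after Definition \ref{TreePreOrder}), and transitivity of $<_T$ is immediate from its definition as a transitive closure, so antisymmetry of $<_T$ is what remains; this will follow once we verify that between any two edges $e, f$ there is at most one $\prec_T$-sequence $e = e_1, \ldots, e_m = f$.

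The crucial ingredient is that $\prec_T$ restricts at any fixed vertex $v$ of $T$ to a strict linear order on the edges incident to $v$ (coming from the linear order $<_v$). Given such a sequence, let $v_k$ be the common vertex of $e_k$ and $e_{k+1}$. If $v_{k-1} = v_k$ for some $k$, then $e_{k-1}, e_k, e_{k+1}$ all meet at this vertex and transitivity of the linear order yields $e_{k-1} \prec_T e_{k+1}$ directly, so $e_k$ may be deleted to produce a shorter $\prec_T$-sequence with the same endpoints. Iterating, we reach a reduced form with all consecutive shared vertices distinct; the edges $e_k = (v_{k-1}, v_k)$ then trace a walk of distinct edges in $T$, which must be simple since a tree admits no closed walks on distinct edges. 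By the uniqueness of paths in a tree (Remark \ref{Unique-path}), this reduced sequence is determined by $e$ and $f$ alone.

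The main obstacle is reconciling this reduction with the verbatim statement of Observation \ref{TransitiveClosureProperty}, since nonreduced $\prec_T$-sequences genuinely do exist in star configurations at high-degree vertices; they all collapse to the same reduced tree-path sequence, but are distinct as finite sequences. An equivalent and cleaner route is to prove directly that $\prec_T$ is acyclic: in a hypothetical minimal cycle either some $v_{k-1} = v_k$, in which case the shortcut above contradicts minimality, or all consecutive $v_k$'s differ, in which case the $e_k = (v_{k-1}, v_k)$ encode a closed walk of distinct edges in $T$, contradicting the tree property. Acyclicity of $\prec_T$ immediately yields antisymmetry of its transitive closure $<_T$, completing the proof that $<_T$ is a partial order on the edges of $T$.
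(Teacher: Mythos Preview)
Your proof is correct, and in fact more careful than the paper's own argument. The paper simply asserts that any $\prec_T$-increasing sequence of edges must form a path in $T$, and since the path between two edges of a tree is unique, Observation \ref{TransitiveClosureProperty} applies directly. As you rightly note, this is not literally true: at a vertex of degree $\geq 3$ the local linear order on incident edges produces several distinct $\prec_T$-chains with the same endpoints (for instance $(e_1,e_2,e_3)$ versus $(e_1,e_3)$ in a star), so the hypothesis of the Observation fails as stated. Your reduction step---collapsing repeated common vertices until the sequence traces an honest tree path---patches this gap, and your alternative route via acyclicity of $\prec_T$ is cleaner still, bypassing the Observation entirely. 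Both arguments rest on the same underlying tree property (no closed walk on distinct edges), but yours makes the logical dependence explicit where the paper's is elliptical.
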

\begin{proof}
Every finite sequence of edges in $T$, with the property
that every two consecutive edges $e$ and $f$ we have $e \prec_T f$, must form a path. 
Now, between every two edges there is exactly one path, hence at most one sequence as above.
Hence by Lemma \ref{TransitiveClosureProperty}
$<_T$ is antisymmetric.
It is clearly anti-reflexive, hence a strong order on the edges of $T$.
\end{proof}

\begin{remark}\label{ExtensionRemark}
Since edges of $T$ can be viewed as transpositions in $S_n$, a linear extension of $<_T$ defines a word in $F_n$
by Theorem \ref{GouldenYongTheorem}.
Therefore, we obtain another interpretation of $F_n$: 
The elements of $F_n$ are all the linear extensions of the partial orders $<_T$
for all non-crossing geometric trees $T$ of order $n$.
\end{remark}

\section{Linearity and caterpillars}
\subsection{Linearity and caterpillars}
\begin{remark}
From now on, all arithmetic on elements of geometric trees will be done modulo $n$.
\end{remark}
Let $T$ be a geometric tree. Recall Definitions \ref{BoundaryCatDefinition} and \ref{TreeOrderDefinition}. We find  necessary  and sufficient  condition
for $<_T$ to be linear.
\begin{theorem}\label{CtrPlrThm1}
 For a geometric tree $T$, $<_T$ is a linear order  if and only if $T$ is a boundary caterpillar.
\end{theorem}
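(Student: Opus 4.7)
My plan is to prove both implications directly. A key reduction throughout is the following: if $e_1, e_2, \ldots, e_m$ is a sequence of edges of $T$ in which consecutive edges share a vertex, and every consecutive triple $(e_{k-1}, e_k, e_{k+1})$ is \emph{consistent}---meaning $e_{k-1} \prec_T e_k \prec_T e_{k+1}$ or else all relations reversed---then the whole sequence is monotone in $\prec_T$ by a trivial induction. Hence to decide whether $<_T$ is total it suffices to test triples of edges whose outer edges are not already adjacent.

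For the ``if'' direction, the crucial observation is that the middle edge $f$ of such a triple $(e, f, g)$ must have both endpoints of degree at least $2$, since each endpoint of $f$ carries one of $e, g$. In a boundary caterpillar every non-leaf vertex lies on the spine, and the only edges joining two non-leaves are spine edges of the form $f = (j, j+1)$. Writing $e = (j, u)$ and $g = (j+1, v)$ with $u, v \notin \{j, j+1\}$, the fact that $j+1$ is the immediate successor of $j$ in $<_j$ gives $j+1 <_j u$, whence $e \prec_T f$; symmetrically $j$ is the maximum of $<_{j+1}$, so $v <_{j+1} j$ and $f \prec_T g$. Every triple is therefore consistent and $<_T$ is total.

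For the converse I first prove a structural characterization: $T$ is a boundary caterpillar if and only if every edge of $T$ is either a boundary edge $(j, j+1)$ or is incident to a leaf of $T$. The forward direction is immediate from Definition~\ref{BoundaryCatDefinition}. For the converse, if every edge is boundary or leaf-incident, then the induced subgraph of $T$ on its non-leaves is itself a tree whose edges are all boundary, so being a subtree of the cycle on $\{1, \ldots, n\}$ it is a path on consecutive vertices or a single vertex---exactly the spine shape in Definition~\ref{BoundaryCatDefinition}. Now assume $T$ is not a boundary caterpillar; the characterization produces a chord $f = (a, b) \in E(T)$ with $a, b$ both of degree $\geq 2$. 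Pick any neighbours $u$ of $a$ with $u \neq b$ and $v$ of $b$ with $v \neq a$, and set $e = (a, u), g = (b, v)$. Since $T$ is non-crossing, each component of $T - f$ must fit entirely on one side of the chord, so the two components are $\{a\} \cup \{a+1, \ldots, b-1\}$ and $\{b\} \cup \{b+1, \ldots, a-1\}$ (in some order), forcing $u$ and $v$ into opposite arcs. A direct computation with the orders $<_a, <_b$ then shows that $f$ is a local extremum of the triple in $\prec_T$---either $e \prec_T f$ and $g \prec_T f$, or both relations reversed---so $e$ and $g$ are incomparable in $<_T$ and $<_T$ is not total. The main obstacle is the structural characterization; once a chord whose endpoints are both non-leaves is produced, the inconsistent triple is forced by the non-crossing hypothesis together with a one-line index calculation.
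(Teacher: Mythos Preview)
Your reduction to consistent triples and the ``if'' direction are correct and pleasantly direct. The structural characterization of boundary caterpillars in the converse is also correct. The gap is in the final step.

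You assert that for a non-boundary edge $f=(a,b)$ with both endpoints non-leaves, ``each component of $T-f$ must fit entirely on one side of the chord,'' so that \emph{any} neighbours $u$ of $a$ and $v$ of $b$ are forced into opposite arcs. This is false: $a$ and $b$ lie on the chord itself, so edges emanating from $a$ (or from $b$) may go into either arc without crossing $f$. Concretely, take $n=6$ and $T$ with edge set $\{(1\ 2),(1\ 3),(1\ 6),(3\ 4),(3\ 5)\}$. Here $f=(1\ 3)$ is a non-boundary edge with both endpoints of degree $3$, yet the component of $1$ in $T-f$ is $\{1,2,6\}$, which meets both arcs. Choosing $u=6$ and $v=4$ gives $(1\ 6)\prec_T(1\ 3)\prec_T(3\ 4)$, a \emph{consistent} triple, so this choice does not witness non-linearity.

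The repair is to pick $u,v$ with care rather than arbitrarily. Since $f$ is not a boundary edge, both arcs are nonempty, and each arc can reach the rest of $T$ only through $a$ or $b$; hence $N(a)\setminus\{b\}$ and $N(b)\setminus\{a\}$ cannot both lie inside a single arc, and a short case check then produces $u\in N(a)$ and $v\in N(b)$ in opposite arcs, after which your local-extremum computation goes through. For comparison, the paper's proof of this direction avoids the issue entirely by inducting on $n$, peeling off the $<_T$-minimal edge via Lemmas~\ref{CtrPlrEdgeLemma1} and~\ref{CtrPlrEdgeLemma2}; your triple-based argument is a nice, non-inductive alternative once this step is corrected.
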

First we prove several lemmas.
\begin{lemma}\label{path-order-lemma}
If $P$ is a path $\left(j_1\ j_2\right),\dots   ,\left(j_{m-1}\ j_m\right)$ in a geometric tree $T$  such that 
$j_1 <_{j_1} j_2 <_{j_1}\dots   <_{j_1} j_m$, then $<_P$
is a linear order:~
$$\left(j_1\ j_2\right) \prec_P\dots   \prec_P \left(j_{m-1}\ j_m\right).$$ \end{lemma}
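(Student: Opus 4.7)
The plan is to reduce the claim that $<_P$ is a linear order with the specified ordering to verifying the cover relations
$$(j_k\ j_{k+1}) \prec_P (j_{k+1}\ j_{k+2}) \qquad (1 \leq k \leq m-2).$$
Once that chain of $\prec_P$-relations is in hand, the definition of $<_P$ as the transitive closure of $\prec_P$ (Definition \ref{TreeOrderDefinition}) immediately yields $(j_i\ j_{i+1}) <_P (j_{i'}\ j_{i'+1})$ for every $i < i'$; since $P$ has exactly $m-1$ edges and Theorem \ref{TreeOrderTheorem} already tells us $<_P$ is a partial order, this chain of comparisons is all that is needed to conclude that $<_P$ is a linear order of the stated form.

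By Definition \ref{TreePreOrder}, the cover relation $(j_k\ j_{k+1}) \prec_P (j_{k+1}\ j_{k+2})$ is, at the common vertex $j_{k+1}$, exactly the statement $j_{k+2} <_{j_{k+1}} j_k$. So the whole task boils down to translating the global hypothesis $j_1 <_{j_1} j_2 <_{j_1} \dots <_{j_1} j_m$—which says that, viewed from $j_1$, the vertices $j_2, \dots , j_m$ occur in that clockwise order around the circle with no wrap-around past $j_1$—into the analogous statement from the vantage point of each intermediate $j_{k+1}$.

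This translation is a pure cyclic shift that I would carry out directly from Definition \ref{cyclic-order}. The hypothesis places the vertices in the clockwise cyclic order $j_1, j_2, \dots, j_m$; consequently, starting at $j_{k+1}$ and proceeding clockwise one encounters $j_{k+2}, j_{k+3}, \dots, j_m, j_1, j_2, \dots, j_k$ in this order before returning to $j_{k+1}$. In particular $j_{k+2}$ appears strictly before $j_k$ in the order $<_{j_{k+1}}$, giving the desired cover relation.

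I do not expect a real obstacle here. The main thing to be careful about is bookkeeping the cyclic translation from $<_{j_1}$ to $<_{j_{k+1}}$ cleanly, including the case $k+1 = 1$ (trivial) and the wrap-around through $j_m, j_1$ when $k \geq 1$. Notably, the non-crossing hypothesis and the ambient tree structure $T$ play no role in the argument beyond making the notation $\prec_P$ well-defined; the whole lemma follows from the cyclic-order hypothesis together with the definitions.
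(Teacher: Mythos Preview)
Your proposal is correct and follows the same approach as the paper: reduce to the chain of cover relations $(j_k\ j_{k+1}) \prec_P (j_{k+1}\ j_{k+2})$ for $1\le k\le m-2$. The paper's proof is a single sentence asserting these relations ``by Definition~\ref{TreePreOrder}'', leaving the cyclic translation from $<_{j_1}$ to $<_{j_{k+1}}$ implicit; you have simply spelled that step out.
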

\begin{proof}
For every $1 \leq i \leq m-2$ we have $(j_i\ j_{i+1}) \prec_P (j_{i+1}\ j_{i+2})$ by Definition \ref{TreePreOrder}.
\end{proof}

\begin{lemma}\label{star-order-lemma}
If $S$ is a star $(i\ j_1),\dots,(j\ j_m)$ in a geometric graph $T$, such that or every $1\leq k \leq m$,
$j_{k} >_i j_{k+1}$, then $<_S$ is a linear order:
$$(i\ j_1) <_S \dots   <_S (i\ j_m).$$
\end{lemma}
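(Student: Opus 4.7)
The plan is to deduce this almost directly from Definition \ref{TreePreOrder} and the transitivity of the cyclic linear order $<_i$, after a small observation about paths in a star.

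First I would observe that in a star with center $i$, any two distinct edges $(i\ j_k)$ and $(i\ j_l)$ share the vertex $i$, so they are adjacent. Consequently the unique path in $S$ connecting the edge $(i\ j_k)$ to the edge $(i\ j_l)$ (in the sense of Definition \ref{TreeOrderDefinition}) is the two-edge sequence $(i\ j_k), (i\ j_l)$, and so $(i\ j_k) <_S (i\ j_l)$ if and only if $(i\ j_k) \prec_S (i\ j_l)$. This reduces the problem to checking the pre-order $\prec_S$ for every pair.

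Next I would use that $<_i$ is a linear order on $[n]$ by Definition \ref{cyclic-order}. The hypothesis gives $j_1 >_i j_2 >_i \dots >_i j_m$, so by transitivity of $<_i$ we conclude $j_k >_i j_l$ for every $1\leq k<l\leq m$. By Definition \ref{TreePreOrder}, $j_l <_i j_k$ is exactly the condition that $(i\ j_k) \prec_S (i\ j_l)$. Combining with the previous paragraph, $(i\ j_k) <_S (i\ j_l)$ for all $k<l$, which gives the asserted linear chain
\[
(i\ j_1) <_S (i\ j_2) <_S \dots <_S (i\ j_m).
\]
Antisymmetry and antireflexivity are already supplied by Theorem \ref{TreeOrderTheorem}, so $<_S$ is a linear order on the edges of $S$.

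There is no real obstacle here; the only subtle point is recognising that in a star every pair of edges is adjacent, so the transitive closure in Definition \ref{TreeOrderDefinition} never has to chain through intermediate edges and the whole claim collapses to a direct application of the definition together with transitivity of $<_i$.
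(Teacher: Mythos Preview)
Your proof is correct and is essentially the same approach as the paper's, which simply says ``Follows from the definition of the geometric tree order.'' You have just spelled out explicitly what that one-line proof leaves implicit: that in a star every two edges are adjacent through the center, so $<_S$ coincides with $\prec_S$, and then Definition~\ref{TreePreOrder} together with transitivity of $<_i$ gives the chain.
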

\begin{proof}
Follows from the definition of the geometric tree order. 
\end{proof}

\begin{lemma}\label{CtrPlrEdgeLemma1}
Let $T$ be a non-crossing geometric tree and such that $<_T$ is a linear order. Let $e$ be the $<_T$-smallest edge 
in $T$.
Then for some $1 \leq i \leq n$, $e = (i\ i+1)$ and $i$ is a leaf of $T$. 
\end{lemma}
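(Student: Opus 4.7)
The plan is to let $e=\left(a\ b\right)$ be the $<_T$-minimum edge and extract its geometric shape by combining the local structure of $\prec_T$ at $a$ and at $b$ with the non-crossing property of $T$.

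First, since $e$ is $<_T$-minimum it is in particular the minimum edge in the star at $a$. But Lemma \ref{star-order-lemma} completely describes the order on a star: its minimum is the edge to the $<_a$-maximum neighbor. Hence $b$ is the $<_a$-maximum neighbor of $a$, so every other neighbor $v$ of $a$ satisfies $v<_a b$, which places $v$ on the arc $\{a+1,\dots,b-1\}$ (with all arithmetic modulo $n$). Symmetrically, every neighbor of $b$ other than $a$ lies on the complementary arc $\{b+1,\dots,a-1\}$.

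Next, I would use the non-crossing hypothesis to identify the two components of $T-e$, calling them $T_a\ni a$ and $T_b\ni b$. Since no edge of $T$ may cross the chord $e$, any edge of $T$ other than $e$ either has both endpoints on a single arc of $e$ or is incident to one of $a,b$. Combined with the neighbor constraint above, this rules out edges from the arc $\{a+1,\dots,b-1\}$ to $b$ and from the arc $\{b+1,\dots,a-1\}$ to $a$, so the vertex set of $T_a$ is forced to be $\{a\}\cup\{a+1,\dots,b-1\}$ and that of $T_b$ to be $\{b\}\cup\{b+1,\dots,a-1\}$.

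I then argue by contradiction: assume both arcs are nonempty, so $T_a$ and $T_b$ each contain an edge, say $f$ and $g$, distinct from $e$. Any $\prec_T$-chain from $f$ to $g$ has consecutive edges sharing a vertex, and since the vertex sets of $T_a$ and $T_b$ are disjoint the chain must contain $e$ itself at the moment it transitions between the two sides. But then the edge immediately preceding $e$ in the chain is $\prec_T$-below $e$, contradicting the $<_T$-minimality of $e$. So no such chain exists, $f$ and $g$ are incomparable in $<_T$, and this contradicts the assumed linearity.

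Therefore one of the two arcs is empty, which forces $\{a,b\}=\{i,i+1\}$ for some $1\leq i\leq n$, and the corresponding component of $T-e$ reduces to the singleton $\{i\}$, making $i$ a leaf of $T$. The step I expect to be the main obstacle is the second one: combining the non-crossing property with the neighbor constraint from step one to pin down the vertex partition of $T-e$ between the two arcs; once this is in hand, the chain argument in step three runs cleanly.
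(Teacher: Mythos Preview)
Your argument is correct, and in fact the step you flag as the main obstacle works exactly as you describe: non-crossing forbids edges between the two open arcs, and your star computation at $a$ and $b$ forbids edges from $a$ into the far arc and from $b$ into the near arc, so the two sets $\{a\}\cup\{a+1,\dots,b-1\}$ and $\{b\}\cup\{b+1,\dots,a-1\}$ are the two components of $T-e$.

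The paper's proof uses the same core mechanism---an increasing $\prec_T$-chain through $e$ would force an edge strictly below $e$---but organizes it differently and more locally. It argues in two separate stages: first, if neither endpoint of $e=(i\ j)$ is a leaf, it picks edges $(k\ i)$ and $(j\ m)$ adjacent to $e$ on either side; the path between them is just $(k\ i),(i\ j),(j\ m)$, which cannot be monotone since $e$ is minimal, so linearity fails. Second, assuming $i$ is a leaf, it shows $j=i+1$ directly: if not, connectivity and non-crossing produce a vertex $k$ with $i<_i k<_i j$ and $(j\ k)\in T$, and then $(j\ k)\prec_T(i\ j)$ contradicts minimality. Your route is more geometric and proves both conclusions simultaneously by showing one arc is empty; the paper's route is shorter and never needs to identify the full components of $T-e$, only edges immediately adjacent to $e$. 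Both are clean; yours extracts slightly more structure along the way.
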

\begin{proof}
 Assume that $<_T$ is a linear order, and let $e= (i\ j)$ be the $<_T$-first edge. First we show that either 
 $i$ or $j$ must be a leaf if $T$. Assume otherwise.
 Then there are edges $(k\ i)$ and $(j\ m)$ with $k\neq j$ and $m \neq i$ .
The unique path from $(k\ i)$ to $(j\ m)$ is 
 $(k\ i), (i\ j), (j\ m)$ and neither $(k\ i) <_T (i\ j) <_T (j\ m)$ nor $(k\ i) >_T (i\ j) >_T (j\ m)$ holds, 
 because $(i\ j)$ is minimal. Therefore $(k\ i)$ and $(j\ m)$ are not comparable by $<_T$,
 a contradiction to the linearity of $T$.
 
 Without loss of generality, assume that $i$ is a leaf of $T$. Next we show that $j = i+1$.
 Suppose otherwise. Because $T$ is a non-crossing geometric tree and $j$ is the unique neighbor of $i$ there is $k$ between $i$ and $j$, $i <_i k <_i j$
 such that $(k\ j)$ is an edge in $T$. But $i <_i k <_i j$ is equivalent to $j <_j i <_j < k$ which implies that 
 $(j\ k) <_T (i\ j)$, a contradiction to $(i\ j)$ being the minimal edge. Thus we have $j = i + 1$ as claimed.
\end{proof}

Using the same reasoning, we obtain a similar theorem about the maximal elements in $<_T$.
\begin{lemma}\label{CtrPlrEdgeLemma3}
Let $T$ be a non-crossing geometric tree and such that $<_T$ is a linear order. Let $f$ be the $<_T$-largest edge in 
$T$.
Then, for some $1 \leq i \leq n$, $e = (i-1 \ i)$  and $i$ is a leaf of $T$. 
\end{lemma}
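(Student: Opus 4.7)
The plan is to mirror the proof of Lemma~\ref{CtrPlrEdgeLemma1}, with ``minimal'' replaced throughout by ``maximal'' and the critical auxiliary vertex taken from the opposite arc. Let $f=(i\ j)$ be the $<_T$-largest edge of $T$.

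First I would show that at least one of $i$, $j$ is a leaf. Suppose for contradiction that both have another neighbor, so there exist edges $(k\ i)$ and $(j\ m)$ with $k\neq j$ and $m\neq i$. The unique path in $T$ from $(k\ i)$ to $(j\ m)$ is $(k\ i),(i\ j),(j\ m)$; since $<_T$ is the transitive closure of $\prec_T$ along paths (Definition~\ref{TreeOrderDefinition}), for $(k\ i)$ and $(j\ m)$ to be $<_T$-comparable---which linearity forces---this three-edge path must be $\prec_T$-monotone. In the increasing case this yields $(i\ j)<_T(j\ m)$, and in the decreasing case $(i\ j)<_T(k\ i)$; both contradict the maximality of $(i\ j)$.

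Now WLOG $i$ is a leaf with unique neighbor $j$. To show $j=i-1$, assume otherwise: then the cyclic arc $\{j+1,j+2,\dots,i-1\}$ (indices mod $n$) is nonempty, and since $T$ is non-crossing and $i$ is a leaf, every vertex of this arc must connect to the rest of $T$ through $j$. Hence there is some $k$ in the arc with $(k\ j)\in T$, and this $k$ satisfies $j<_j k<_j i$, so $k<_j i$. Definition~\ref{TreePreOrder} then gives $(j\ i)\prec_T(j\ k)$, i.e., $(i\ j)<_T(j\ k)$, contradicting the maximality of $(i\ j)$. The main obstacle, as in Lemma~\ref{CtrPlrEdgeLemma1}, is simply keeping the cyclic orders $<_i$ and $<_j$ straight; the essential change from the minimal case is that the fatal neighbor of $j$ now lies on the arc cut off by $(i\ j)$ on the opposite side.
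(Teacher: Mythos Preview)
Your proposal is correct and is precisely the ``same reasoning'' the paper alludes to: the paper does not spell out a separate proof for this lemma but simply refers back to the argument of Lemma~\ref{CtrPlrEdgeLemma1}, and your write-up is exactly the expected mirror of that proof, with the auxiliary vertex $k$ taken from the opposite arc $\{j+1,\dots,i-1\}$ so that $k<_j i$ forces $(i\ j)\prec_T(j\ k)$.
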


\begin{lemma}\label{CtrPlrEdgeLemma2}
Let $T$ be a non-crossing geometric tree such that $<_T$ is a linear order. Then any two $<_T$-consecutive
edges in $T$ are adjacent.
\end{lemma}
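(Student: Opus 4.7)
My plan is to argue by contradiction, unwinding the definition of $<_T$ as the transitive closure of $\prec_T$ along unique tree paths (Definition \ref{TreeOrderDefinition}). Suppose $e$ and $f$ are $<_T$-consecutive with $e <_T f$, and suppose toward a contradiction that they share no vertex. By definition, the unique path $e = e_1, e_2, \ldots, e_m = f$ in $T$ witnessing $e <_T f$ satisfies $e_k \prec_T e_{k+1}$ for every $1 \leq k < m$, and non-adjacency forces $m \geq 3$.

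The key step is to show that the intermediate edge $e_2$ sits strictly between $e$ and $f$ in $<_T$. The unique $T$-path from $e$ to $e_2$ is the two-edge segment $e_1, e_2$, and we already have $e_1 \prec_T e_2$, hence $e <_T e_2$. Symmetrically, the unique $T$-path from $e_2$ to $f$ is the tail $e_2, e_3, \ldots, e_m$, whose consecutive pairs satisfy $\prec_T$ by hypothesis, hence $e_2 <_T f$. Antisymmetry of $\prec_T$ (noted after Definition \ref{TreePreOrder}) rules out $e_2 = e$, and the fact that a simple path in a tree traverses distinct edges, together with $m \geq 3$, rules out $e_2 = f$.

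Thus $e <_T e_2 <_T f$ with $e_2 \notin \{e,f\}$, contradicting the assumption that $e$ and $f$ are $<_T$-consecutive. I do not expect a genuine obstacle: the argument is a direct consequence of the definition of $<_T$ together with the uniqueness of paths in a tree. In fact it works for any such $<_T$ (not necessarily linear) — linearity is needed only to give meaning to ``consecutive'' — and the non-crossing geometric structure plays no role in this lemma.
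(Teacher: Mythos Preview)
Your proof is correct and follows essentially the same approach as the paper's: both observe that the $\prec_T$-increasing path witnessing $e <_T f$ must consist of exactly the two edges $e,f$ (otherwise any intermediate edge on the path would lie strictly between them in $<_T$, violating consecutiveness), which forces adjacency. Your version simply unwinds this a bit more explicitly by contradiction, and your side remark that neither linearity nor the non-crossing hypothesis is actually used is accurate.
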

\begin{proof}
By definition, $(i\ j) <_T (k\ l)$ if and only if there exists 
 an $<_T$-increasing path from $(i\ j)$ to $(k\ m)$.
 However, since $(i\ j)$ and $(k\ l)$ are $<_T$-consecutive,
 this must be a path of length $2$, which implies that these edges are adjacent. 
\end{proof}

The following definition is helpful to make the argument more clear.
\begin{definition}
Let $S\subseteq [n]$. We say that a graph $G = (V, E)$ is a \emph{cyclic path on S} if  $G$ is a path, $V$
is a cyclic interval $[a, b]_S$ in $S$, and there is an edge between any two $<_a$-consecutive
elements of $[a, b]_S$.
\end{definition}
\begin{observation}\label{cyclic-interval}
A geometric tree $T$ is a boundary caterpillar on 
a subset $S$ of $[n]$, if 
and only if the spine of $T$ is a cyclic path in
 $S$.
\end{observation}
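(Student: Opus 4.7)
The plan is to prove Observation \ref{cyclic-interval} as a direct unpacking and comparison of two equivalent characterizations. Both the definition of a boundary caterpillar (Definition \ref{BoundaryCatDefinition}) and the definition of a cyclic path describe a path whose vertices form a contiguous run around the circle, and the proof amounts to transcribing between these two descriptions.

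For the forward direction, I would assume $T$ is a boundary caterpillar on $S$. By Definition \ref{BoundaryCatDefinition}, the spine is either a single vertex, which is trivially a degenerate cyclic path, or a path of the form $(i\ i+1), \dots, (i+k-1\ i+k)$ whose vertex set $\{i, i+1, \dots, i+k\}$ (interpreted modulo $n$) is a cyclic interval $[i, i+k]$ lying in $S$. In the latter case, the spine edges are exactly those joining $<_i$-consecutive elements of this interval, so the spine matches the definition of a cyclic path in $S$.

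For the reverse direction, I would assume the spine of $T$ is a cyclic path in $S$, with vertex set a cyclic interval $[a, b]_S$. Enumerating its elements in $<_a$-order as $a, a+1, \dots, b$ (mod $n$), the edges between $<_a$-consecutive elements are precisely $(a\ a+1), (a+1\ a+2), \dots, (b-1\ b)$. Setting $i = a$ and $k = b - a$ recovers the form demanded by Definition \ref{BoundaryCatDefinition}, so $T$ is a boundary caterpillar on $S$.

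This is essentially a definition chase with no real obstacle. The only care required is to handle the degenerate single-vertex spine consistently and to align the two indexing conventions: the additive form $(i\ i+1)$ in Definition \ref{BoundaryCatDefinition} versus the description via $<_a$-consecutive elements of the cyclic interval $[a, b]_S$. Both describe the same combinatorial object, namely a consecutive run of vertices on the circle joined in the natural order.
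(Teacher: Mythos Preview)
The paper offers no proof for this observation; it is stated as a self-evident restatement of Definition~\ref{BoundaryCatDefinition} in the language of cyclic paths, and your definition-chase is precisely the justification one would supply. One small point: in the reverse direction you enumerate the vertices of $[a,b]_S$ as ``$a, a+1, \dots, b$ (mod $n$)'', but for a proper subset $S\subsetneq [n]$ the $<_a$-consecutive elements of $[a,b]_S$ need not be consecutive integers in $[n]$; the ``$+1$'' in Definition~\ref{BoundaryCatDefinition} must then be read as ``successor in $S$'' (which is how the paper implicitly uses it in the proof of Theorem~\ref{CtrPlrThm1}). With that reading your argument is complete and matches the paper's intent.
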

Now we are ready to prove the Theorem \ref{CtrPlrThm1}.

\begin{proof} 
Let $T$ be a boundary caterpillar. We show that any two edges of $T$ are 
$<_T$-comparable. 

Let $e$ and $f$ be edges of $T$. Then there exist vertices $j$ and $k$ in the spine of $T$ 
such that $e$ is an edge of $S_j$ and $f$ is an edge of $S_k$, the stars centered at $j$ and $k$
respectively. Recall that the spine of a boundary caterpillar is of the form $(i\ i+1),\dots  ,(i+m-1\ i+m)$.
If $j=k$ then $e$ and $f$ are in the same star, hence comparable by Lemma \ref{star-order-lemma}.
If $j \neq k$, assume without loss of generality that $j <_i k$. 
Then the path $(j\ j+1)\dots  ,(k-1\ k)$ is linearly ordered by $<_T$ 
and $(j\ j+1) \leq_T (k-1\ k)$ by Lemma \ref{path-order-lemma}
Also, by Lemma \ref{star-order-lemma}, $e \leq_T (j\ j+1)$ and $f \leq_T (k-1 \ k)$, therefore
$e \leq_T f$. Hence $e$ and $f$ are comparable, as desired.

 We prove the converse by induction on $n$, the number of edges.
 For $n = 1,2$, the claim is correct, because for every geometric tree $T$ with 
 one or two edges $<_T$ is a linear order and $T$ is a boundary caterpillar,. Now suppose that $n>2$ 
 and the theorem holds for all $k < n$.
 
 Let $T$ be a non-crossing tree, such that $<_T$ is a linear order. By Lemma \ref{CtrPlrEdgeLemma1} the first edge in $T$ is of the form $(i\ i+1)$ with $i$ a leaf.
 By removing the vertex $i$ and the edge $(i\ i+1)$ we obtain a geometric tree $T'$ on $\{i+1, \dots  ,i-1\}$,
 which is a boundary caterpillar by the induction hypothesis,
 and its minimal edge is adjacent to $(i\ i+1)$ by lemma \ref{CtrPlrEdgeLemma2}. By Lemma 
 \ref{CtrPlrEdgeLemma1}, this edge must be either 
 $(i-1\ i+1)$ with $i-1$ a leaf of $T'$, or $(i+1\ i+2)$ with $i+1$ a leaf of $T'$.
  
 If the minimal edge is $(i-1\ i+1)$, then $i+1$ is not a leaf, and is therefore in the spine of $T'$. 
 $T$ is obtained from $T'$ by connecting a new leaf to a vertex in the spine, hence $T$ is a caterpillar
 with spine $P$  equal to the spine of $T'$, by Observation \ref{CatInduction1}. 
 To show that $T$ is a boundary caterpillar we need to show that $P$ is a cyclic path on $[n]$.
 $T'$ is a boundary caterpillar on $[n] \setminus i$, hence $P$ is a cyclic interval in $[n] \setminus \{i\}$.
 Note that every cyclic path $I$ in $[n] \setminus {i}$ is also cyclic path on $[n]$, unless both $i-1$
 and $i+1$ are vertices of $I$. But $i-1$ is a leaf in $T'$, so it is not a vertex of $P$. Therefore, $P$ 
 is a cyclic path on $[n]$ as required.
 
 If the minimal edge is $(i+1\ i+2)$, then $i+2$ is an end vertex of $P'$, the spine of $T'$, because otherwise
 it would mean that the leaf $i+1$ is a part of $P'$, which is impossible.
 By Observation \ref{CatInduction2}, $T$ is a caterpillar whose spine $P$ is obtained by adding $(i+1 \ i+2)$
 to the edge set of $P'$, and $i+1$ to the vertex set of $P'$. $P'$ is a cyclic path with vertex set $[i+2\ k]$ for
 some $k \in [n] \setminus {i}$. Hence $P$ is a cyclic path on $[n]$, as required.
\end{proof}

\section{Center of the Hurwitz graph}
\subsection{Caterpillars and the Hurwitz graph}
Recall the definitions of the Hurwitz Graph $\mathcal{H}(S_n)$ (Definition \ref{HurwitzGraph}) and of the mapping $\Gamma$ (Definition \ref{Defn-G}). 
In this chapter we need a theorem of Adin and Roichman \cite{HurwitzGraph}.
\begin{theorem}[\cite{HurwitzGraph}, Theorem 10.3]\label{TheoremHurwitzRadius}
The radius of the Hurwitz graph $\mathcal{H}(S_n)$ 
is ${n-1 \choose 2}$.
\end{theorem}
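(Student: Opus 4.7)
The plan is to use the projection $\Phi : F_n \to S_{n-1}$ from Definition \ref{Defn-Phi} to compare distances in $\mathcal{H}(S_n)$ with those in the Cayley graph $\Gamma_{n-1}$ of $S_{n-1}$ generated by the adjacent transpositions $s_i = (i\ i+1)$. First I would establish that if $w' = R_i(w)$ then $\Phi(w') \in \{\Phi(w),\, \Phi(w) s_i\}$. This is a local computation: a direct conjugation check shows that $R_i$ leaves $\sigma_j$ unchanged for every $j \neq i+1$, so only the sets $A_i$ and $A_{i+1}$ appearing in the definition of $\pi_w$ can change, and hence $\pi_{w'}$ and $\pi_w$ can differ only in positions $i$ and $i+1$, which is exactly what it means for them to coincide or to differ by right multiplication by $s_i$. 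Consequently $\Phi$ is a graph homomorphism from $\mathcal{H}(S_n)$ to $\Gamma_{n-1}$, so
$$d_{\mathcal{H}}(w_1, w_2) \;\geq\; d_{\Gamma_{n-1}}(\Phi(w_1), \Phi(w_2)) \qquad \text{for all } w_1, w_2 \in F_n.$$

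Combining this inequality with the surjectivity of $\Phi$ (which can be verified by constructing an explicit preimage for each $\pi \in S_{n-1}$) yields the lower bound on the radius: for any $v \in F_n$, pick $u \in F_n$ with $\Phi(u) = \Phi(v) \cdot w_0$, where $w_0 \in S_{n-1}$ is the longest element. Then $d_{\Gamma_{n-1}}(\Phi(v), \Phi(u)) = \ell(w_0) = \binom{n-1}{2}$, so the displayed inequality gives $d_{\mathcal{H}}(v, u) \geq \binom{n-1}{2}$, and hence the radius is at least $\binom{n-1}{2}$.

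The matching upper bound is the main obstacle. The goal is to exhibit a center $v_0 \in F_n$ such that $d_{\mathcal{H}}(v_0, u) \leq \binom{n-1}{2}$ for every $u \in F_n$. I expect $v_0$ to come from a boundary caterpillar, since the rest of the paper suggests these are precisely the central vertices, and by Theorem \ref{CtrPlrThm1} such a tree has a linear order $<_T$, so $v_0$ is the unique word read off this order via Remark \ref{ExtensionRemark}. Given an arbitrary $u$, I would argue by induction on $n$: use at most $n-2$ Hurwitz moves to bring $u$ to a word whose first transposition equals that of $v_0$; after this, deleting the common leaf reduces the problem to $\mathcal{H}(S_{n-1})$, where the inductive hypothesis contributes a further $\binom{n-2}{2}$ moves. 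The total $(n-2) + \binom{n-2}{2} = \binom{n-1}{2}$ matches the claim exactly. The delicate step is showing that $n-2$ Hurwitz moves always suffice to match the first transposition without spoiling the tail; this will probably require a potential-function argument, tracking something like the Coxeter length of $\Phi(v_0)^{-1}\Phi(u)$ or an analogous inversion statistic on the geometric tree $\Gamma(u)$, and arguing that each outer step decreases the potential by one while still preserving the ability to recurse.
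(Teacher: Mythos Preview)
The paper does not prove Theorem~\ref{TheoremHurwitzRadius}; it is quoted as a black box from \cite{HurwitzGraph}. So strictly speaking there is no ``paper's own proof'' to compare against. That said, two remarks are in order.

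First, your lower-bound argument via the map $\Phi$ of Definition~\ref{Defn-Phi} is correct and is exactly the mechanism of \cite{HurwitzGraph}: the computation that $R_i$ perturbs only $\sigma_{i+1}$, hence only $A_i$ and $A_{i+1}$, hence only the values of $\pi_w$ at $i$ and $i+1$, is precisely why $\Phi$ is introduced in this paper. Surjectivity of $\Phi$ is also established in \cite{HurwitzGraph}, so that half of your plan is on solid ground.

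Second, your upper-bound induction is not merely parallel to the paper---it \emph{is} the paper's proof of Theorem~\ref{CenterThm1}, run with a slightly different induction hypothesis. In particular, the step you flag as ``delicate'' (using at most $n-2$ Hurwitz moves to force the first letter of $u$ to equal $(i\ i+1)$) is already carried out here with no potential function and no inversion statistic. The device is the ``push preserving $i$'' operation: take the \emph{last} letter $s_k$ of $u$ that moves $i$ and push it to position~$1$ preserving $i$, at a cost of $k-1\le n-2$ moves; in the resulting word $u'$ the vertex $i$ is a leaf of $\Gamma(u')$ and the unique transposition containing $i$ sits in the first slot, so Lemma~\ref{caterpillar_first_letter_leaf} forces that first letter to be $(i\ i+1)$. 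Nothing about the tail is ``spoiled'': the remaining letters $(s'_2,\dots,s'_{n-1})$ form a word in the Hurwitz graph on $[n]\setminus\{i\}$, to which the induction hypothesis applies verbatim. So you can drop the speculative potential-function paragraph and simply invoke this push plus Lemma~\ref{caterpillar_first_letter_leaf}.
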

The main result of this section is the following.
\begin{theorem}\label{CenterThm1}
 Every $w\in \mathcal{H}\left(S_n\right)$ such that 
 $\Gamma(w)$ is a boundary caterpillar 
 is a central vertex of 
 $\mathcal{H}\left(S_n\right)$. 
\end{theorem}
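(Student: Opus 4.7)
By Theorem \ref{TheoremHurwitzRadius}, the radius of $\mathcal{H}(S_n)$ equals $\binom{n-1}{2}$, so every vertex has eccentricity at least $\binom{n-1}{2}$. It therefore suffices to prove the matching upper bound $\mathrm{ecc}(w)\leq\binom{n-1}{2}$ for each $w$ with $\Gamma(w)$ a boundary caterpillar. Fix such a $w$ and an arbitrary target $v\in F_n$: the task is to construct a sequence of at most $\binom{n-1}{2}$ Hurwitz moves from $w$ to $v$.

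The plan is induction on $n$, with the base cases $n\le 2$ trivial, and it proceeds in two phases. In the first phase I would transform $w$, using at most $n-2$ Hurwitz moves, into a word $w'$ whose last transposition coincides with the last transposition $\tau$ of $v$. This ``sliding'' step exploits Theorem \ref{CtrPlrThm1} together with Lemmas \ref{CtrPlrEdgeLemma3} and \ref{CtrPlrEdgeLemma2}: the edges of the boundary caterpillar $\Gamma(w)$ are linearly ordered by $<_T$ and the $<_T$-largest edge is of the form $(i-1,i)$ with $i$ a leaf, so a bubble-sort-style procedure using $R_j$ for decreasing $j$ should let us bring the required transposition into the last slot. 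The precise choice of which transposition to slide and how to reshape the penultimate slot at each step is dictated by the structure of $\Gamma(v)$, and in particular by its own $<_{\Gamma(v)}$-maximal edge.

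In the second phase, with the last entries in agreement, the two length $(n-2)$ prefixes may be viewed as maximal chains in the interval $[\mathrm{id},\,c\cdot\tau^{-1}]$ of $NC(n)$. Depending on whether $\tau$ is a cyclically adjacent transposition or not, this interval is isomorphic either to $NC(n-1)$ or to a product $NC(k)\times NC(n-k)$ (one factor for each cycle of $c\cdot\tau^{-1}$); applying the inductive hypothesis on each factor---together with the observation that the prefix of $w'$ remains a boundary-caterpillar word in the reduced sense---should produce a further sequence of at most $\binom{n-2}{2}$ Hurwitz moves connecting the two prefixes. Combined with the first phase this gives $d_{\mathcal{H}}(w,v)\le(n-2)+\binom{n-2}{2}=\binom{n-1}{2}$, as required.

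The main obstacle is the first phase: one must simultaneously guarantee that the prescribed $\tau$ can be slid into the last slot in at most $n-2$ steps \emph{and} that the resulting prefix is amenable to the inductive hypothesis, i.e.\ is again a boundary caterpillar in the appropriate reduced sense. Both aspects should follow from the rigidity conferred by Theorem \ref{CtrPlrThm1} (the uniqueness of the word for a given boundary-caterpillar tree) and a careful case analysis on the $\prec_T$-neighbors of the edge being slid out; this is where the main combinatorial work of the proof would reside.
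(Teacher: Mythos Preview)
Your overall induction and the final arithmetic $(n-2)+\binom{n-2}{2}=\binom{n-1}{2}$ are the right shape, but the direction of your first phase creates a real gap. You propose to modify the caterpillar word $w$ so that its last letter becomes $\tau$, the last letter of the arbitrary target $v$. The difficulty is that $\tau$ is an arbitrary $<_{\Gamma(v)}$-maximal edge: it need not be of the form $(i{-}1\ i)$ (Lemma~\ref{CtrPlrEdgeLemma3} only applies when $<_T$ is linear), and it need not be an edge of $\Gamma(w)$ at all, so there is nothing to ``slide''---$\tau$ would have to be manufactured by conjugation. There is no mechanism in your outline guaranteeing this can be done in at most $n-2$ moves, and even when it can (for instance $n=4$, $w=((1\,2),(2\,3),(3\,4))$, $\tau=(1\,3)$, where two moves yield $w'=((2\,3),(1\,4),(1\,3))$), the resulting $w'$ is typically \emph{not} a boundary caterpillar, so the inductive hypothesis does not apply to its prefix. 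The product case $NC(k)\times NC(n-k)$ you mention only compounds this, since ``boundary caterpillar'' has no obvious meaning there.

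The paper sidesteps all of this by reversing the roles: it leaves $w$ untouched and modifies the arbitrary word $u$ instead. Since $\Gamma(w)$ is a boundary caterpillar, Theorem~\ref{CtrPlrThm1} and Lemma~\ref{CtrPlrEdgeLemma1} give that the first letter of $w$ is $t_1=(i\ i{+}1)$ with $i$ a leaf. One takes the \emph{last} letter of $u$ that moves $i$ and pushes it to the first position via an ``$i$-preserving push'' (at each step choosing $R_m$ or $L_m$ so that the moving letter still contains $i$); this costs at most $n-2$ Hurwitz moves. In the resulting $u'$ the vertex $i$ is now a leaf whose unique edge is the first letter, and Lemma~\ref{caterpillar_first_letter_leaf} forces that first letter to equal $(i\ i{+}1)=t_1$. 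Deleting the common first letter drops both words into the Hurwitz graph on $[n]\setminus\{i\}$ (no product decomposition arises, because $(i\ i{+}1)$ is cyclically adjacent), and the suffix $(t_2,\dots,t_{n-1})$ of $w$ is automatically again a boundary caterpillar, so the induction applies verbatim. The key idea missing from your plan is precisely this $i$-preserving push applied to $u$, together with the forcing lemma that pins down the resulting first letter.
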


First we prove a lemma.
\begin{lemma}\label{caterpillar_first_letter_leaf}
Let $w \in \mathcal{H}\left(S_n\right)$. Suppose 
that $k$ is a leaf in $\Gamma(w)$. If the
transposition
$t$ containing $k$ is the last letter in $w$, then  $t = (k-1\ k)$; and if $t$ 
is the first letter in $w$, then 
$t = (k\ k+1)$.
\end{lemma}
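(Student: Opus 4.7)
The plan is to exploit the fact that $w = (t_1,\dots,t_{n-1}) \in F_n$ is a factorization of $c = (1,2,\dots,n)$, i.e.\ $t_1 t_2 \cdots t_{n-1} = c$, combined with the key observation that $k$ being a leaf of $\Gamma(w)$ means $t$ is the \emph{only} letter of $w$ involving $k$; every other letter therefore fixes $k$ as a permutation.

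For the ``last letter'' case, write $t = t_{n-1} = (k, m)$. Then $t_1 \cdots t_{n-2}$ is a product of transpositions none of which involves $k$, so this product fixes $k$. Rearranging the factorization identity gives $t_1 \cdots t_{n-2} = c \cdot (k,m)$, and evaluating at $k$ yields
$$\bigl(c \cdot (k,m)\bigr)(k) = c(m) = m+1 \pmod{n}.$$
Setting this equal to $k$ forces $m = k-1$. The ``first letter'' case is symmetric: if $t = t_1 = (k, m)$, the analogous rearrangement gives $t_2 \cdots t_{n-1} = (k,m) \cdot c$, which must fix $k$; evaluating at $k$ yields $\bigl((k,m)\cdot c\bigr)(k) = (k,m)(k+1)$, and this equals $k$ precisely when $m = k+1$.

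The main obstacle is essentially non-existent, apart from being careful about the direction of composition and arithmetic modulo $n$ near the boundary. Notably, no deeper machinery (the partial order $<_T$, the Goulden--Yong theorem, or the map $\Phi$) is needed: once the leaf condition is translated into the uniqueness of $t$ among letters of $w$ containing $k$, the argument reduces to two one-line computations at the vertex $k$ using only the factorization $t_1 \cdots t_{n-1} = c$.
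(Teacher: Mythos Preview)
Your proof is correct, and it follows a genuinely different route from the paper's own argument.

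The paper argues geometrically, via the Goulden--Yong characterization (Theorem~\ref{GouldenYongTheorem}). Assuming the last letter is $t=(j\ k)$ with $j\neq k-1$, connectivity of the non-crossing tree $\Gamma(w)$ forces some vertex $i$ strictly between $j$ and $k$ (in the cyclic order based at $j$) with $(j\ i)$ an edge; the cyclically-decreasing-neighbors condition then says $(j\ k)$ must precede $(j\ i)$ in $w$, contradicting that $(j\ k)$ is last. The first-letter case is symmetric.

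Your approach instead exploits the factorization identity $t_1\cdots t_{n-1}=c$ directly: since $k$ is a leaf, the unique letter moving $k$ can be isolated, and evaluating the remaining product at $k$ pins down the other endpoint by a single computation. This is more elementary---it uses nothing about the tree order $<_T$ or Goulden--Yong---and arguably cleaner for this particular lemma. The paper's argument, on the other hand, stays within the combinatorial picture of $\Gamma(w)$ and $<_T$ that the surrounding sections are built on, so it integrates more seamlessly with the rest of the exposition. Both are short; yours trades the geometric intuition for an algebraic one-liner.
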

\begin{proof}
Suppose that $k$ is a leaf in $\Gamma(w)$ and the transposition $t$ containing it is the last letter 
in $w$. If $t = (j \ k)$, where 
$j \neq k-1$, then (by connectivity of $\Gamma(w)$ and the assumption that $k$ is a leaf)
there exists $j <_j i <_j k$, such that
$(j\ i)$ is an edge in $\Gamma(w)$, or 
equivalently, a letter in $w$. But then, by Theorem 
\ref{GouldenYongTheorem}(2),
$(j\ k)$ appears in $w$ before $(i\ j)$, a 
contradiction to $(j\ k)$ being the 
last letter in $w$. The case of $t$ being the first letter is similar.
\end{proof}
\begin{observation}
Let $w = (t_1\dots   t_{n-1}) \in \mathcal{H}\left(S_n\right)$ such that $\Gamma(w)$ is a boundary caterpillar,
$t_k = (i\ j)$ for some $1 \leq k \leq n-1$  and $i$ is a leaf in $\Gamma(w)$.
Then for $w' = (t_1,\dots  , t_{k-1},t_{k+1},\dots,
 t_{n-1})$, $\Gamma(w')$ is a boundary caterpillar on 
$\{1,\dots  , n\} \setminus \{i\}$.
\end{observation}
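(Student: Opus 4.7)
The plan is to verify that $\Gamma(w')$ is a non-crossing tree on $[n] \setminus \{i\}$ and then appeal to Observation~\ref{cyclic-interval} by exhibiting its spine as a cyclic path in $[n] \setminus \{i\}$. Since $i$ is a leaf of $\Gamma(w)$, the edge $t_k = (i\ j)$ is the unique edge incident to $i$, so $\Gamma(w')$ is obtained from the tree $\Gamma(w)$ by removing the leaf $i$ and its sole incident edge. This produces a tree, and deleting an edge cannot introduce any crossings, so the first two properties are immediate.

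For the caterpillar structure, let $P$ denote the spine of the boundary caterpillar $\Gamma(w)$; by Observation~\ref{cyclic-interval} its vertex set is a cyclic interval $[a, b]_{[n]}$. When $\Gamma(w)$ has at least three vertices, the leaves of a caterpillar are precisely the vertices not lying on the spine, so $i \notin V(P)$ and $i$ lies strictly outside the cyclic interval $[a, b]_{[n]}$. I would then split into two cases according to the degree of $j$ in $\Gamma(w)$.

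In the case $\deg_{\Gamma(w)}(j) \geq 3$, the vertex $j$ still has degree at least $2$ in $\Gamma(w')$, hence is not a leaf there, so the spine of $\Gamma(w')$ is still $P$ as a subgraph. Because $i \notin [a, b]_{[n]}$, the set $[a, b]_{[n]}$ is also a cyclic interval in $[n] \setminus \{i\}$ and the $<_a$-consecutive relation on it is unchanged, so $P$ is a cyclic path in $[n] \setminus \{i\}$. In the case $\deg_{\Gamma(w)}(j) = 2$, the two neighbors of $j$ in $\Gamma(w)$ are $i$ and exactly one other vertex. Since every interior vertex of $P$ already has two spine-neighbors in $V(P)$ while $i$ lies outside $V(P)$, the vertex $j$ cannot be interior to $P$; therefore $j$ is an endpoint of $P$, say $j = a$, its other neighbor is $a+1$, and after removing $i$ the vertex $j$ has degree $1$ in $\Gamma(w')$ and becomes a leaf. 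The new spine is then $P$ with the endpoint $a$ removed, whose vertex set $[a+1, b]_{[n]}$ remains a cyclic interval in $[n] \setminus \{i\}$.

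The main obstacle is bookkeeping for the degenerate cases, particularly when $\Gamma(w)$ is a star (so the spine is a single vertex and $j$ coincides with the spine vertex) or when the removal produces a tree with at most two vertices; the small cases $n \leq 3$ should also be checked directly as base cases, where $\Gamma(w')$ has at most one edge and is trivially a boundary caterpillar on $[n] \setminus \{i\}$. In each such instance the resulting $\Gamma(w')$ is manifestly a boundary caterpillar, but one must verify this separately rather than by the case analysis above.
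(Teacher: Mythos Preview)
The paper states this result as an \emph{observation} and provides no proof at all, evidently regarding it as immediate from the definitions. Your argument is correct and considerably more detailed than anything the paper supplies: you verify non-crossingness, invoke Observation~\ref{cyclic-interval}, and carry out a clean case split on $\deg_{\Gamma(w)}(j)$ to track how the spine changes, correctly using that a leaf cannot lie on the spine so that the cyclic interval $[a,b]$ survives the passage from $[n]$ to $[n]\setminus\{i\}$.

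One small remark: your sentence ``the leaves of a caterpillar are precisely the vertices not lying on the spine'' presupposes the convention that the spine is the subtree obtained by deleting all leaves (as opposed to a longest path). The paper never makes this convention explicit, but its allowance in Definition~\ref{BoundaryCatDefinition} for the spine to be ``a single vertex'' is consistent with your reading, so there is no real conflict. Your separate handling of the degenerate cases (stars, $n\le 3$) is appropriate and closes the argument.
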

The following definition is useful.
\begin{definition}
Let $w\in \mathcal{H}\left(S_n\right)$ and suppose that the $k$-th letter in $w$ 
is $t_k = (i\ j)$. Then {\em pushing $t_k$ to $l$-th place preserving $i$} means the following.
If $l < k$ we apply $H_{k-1}, \dots   H_{l}$ in succession where 
$H_m = R_m$ if $t_m$ fixes $i$ and $H_m = L_m$ otherwise. 
If $k < l$ we perform the following. We apply $H_k, \dots   H_{l-1}$ in succession, where
$H_m = L_m$ if $t_{m+1}$ fixes $i$ and $H_m = R_m$ 
otherwise.
Note that by applying an $i$-preserving push from $k$ to $l$ we obtain a new word $w'$ whose 
$l$-th letter doesn't fix $i$, and for every $m$ between $k$ and $l$ (including 
$m$ and excluding $l$), $t_m$ fixes $i$. This push costs $|k-l|$ Hurwitz actions.
\end{definition}

We now prove Theorem \ref{CenterThm1}.
\begin{proof}
By induction on $n$. Clearly, for  $n = 1, 2$ the statement is correct.
Assume that the statement holds for $n-1$. Let $w=\left(t_1,\dots,t_{n-1}\right) \in \mathcal{H}\left(S_n\right)$ such that
$\Gamma(w)$ is a boundary caterpillar and let $u\in \mathcal{H}\left(S_n\right)$. By Theorem \ref{CtrPlrThm1} and Lemma \ref{CtrPlrEdgeLemma1}, $t_1 = (i\ i+1)$ such that 
$i$ is a leaf in $\Gamma(w)$. We pick the last 
transposition $s_k$ in $u$  such that $s_k$ moves 
$i$. 
Now we push $s_k$ to the first place in $u$, 
preserving $i$,
and obtain a new word $u'$ such that $i$ is a leaf in $\Gamma(u')$ and the transposition $s'_1$
containing $i$ is the first letter in $u'$. By Lemma \ref{caterpillar_first_letter_leaf},
$s'_1=(i\ i+1)=t_1$. Now we apply the induction hypothesis. $\left(t_2,\dots  ,t_{n-1}\right)$ and 
$\left(s'_2,\dots,   s'_{n-1}\right)$ are both words in $F_{n-1}$(Actually, the product is not the 
cycle $(1,\dots,n)$ but the $n-1$-cycle 
$(i+1,\dots,n,1,\dots,i-1)$ but they give isomorphic
Hurwitz and geometric tree graphs.)
The word $(t_2,\dots,   t_{n-1})$ is linearly ordered and thus central, by the induction hypothesis. 
Therefore, by Theorem \ref{TheoremHurwitzRadius} $dist\left(\left(t_2,\dots,   t_{n-1}\right), \left(s'_2,\dots,   s'_{n-1}\right)\right) \leq {n - 2 \choose 2}$. Therefore the number of Hurwitz actions needed to get $w$ from $u$ is at most 
$n-2 + {n-2 \choose 2} = {n-1 \choose 2}$, which is the radius of $\mathcal{H}\left(S_n\right)$. Therefore for every
$u\in \mathcal{H}\left(S_n\right)$, $dist(u, w)$ is at most the radius of $\mathcal{H}\left(S_n\right)$, which means that $w$ is 
a central vertex of $\mathcal{H}\left(S_n\right)$.
\end{proof}

\subsection{Central vertices in the Hurwitz graph}
The result of the previous section can be generalized.
We prove the following theorem.
\begin{theorem}\label{CenterThm2}
For every geometric tree $T$ on $n$ vertices there is $w\in \mathcal{H}\left(S_n\right)$ such that $\Gamma(w)=T$ 
and $w$ is central in $\mathcal{H}\left(S_n\right)$.
\end{theorem}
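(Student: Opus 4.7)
The plan is to mimic the proof of Theorem~\ref{CenterThm1} by induction on $n$, with the boundary-caterpillar hypothesis on $\Gamma(w)$ replaced by a direct construction of $w$ as a linear extension of $<_T$ whose first entry plays the role of $t_1 = (i,i+1)$ there.

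The main supporting fact I expect to need — and what I see as the principal obstacle — is the following purely combinatorial lemma: \emph{every non-crossing geometric tree $T$ on $[n]$ with $n \ge 2$ has a leaf $i$ whose unique incident edge is either $(i, i+1)$ or $(i-1, i)$ modulo $n$.} I would prove this by induction on $n$: choose an edge $(a,b) \in T$ minimizing the number of vertices on the shorter side of the chord. If that side is empty, $(a,b)$ is already a boundary edge and one checks by a direct argument that at least one of $a$, $b$ can be taken to be a leaf (possibly after descending one step into the tree on the other side). Otherwise, the shorter-side vertices together with one endpoint of $(a,b)$ form a smaller non-crossing tree $T_A$; the leaf produced by induction applied to $T_A$ descends to a leaf of $T$ with the boundary-edge property, modulo a short case analysis handling the few configurations in which the inductive leaf coincides with a chord endpoint or in which its boundary edge in $T_A$ wraps cyclically relative to the enveloping $T$.

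Granted the lemma, let $i$ be such a leaf; without loss of generality its edge is $(i, i+1)$, since the case $(i-1, i)$ is handled symmetrically by using the ``last letter'' form of Lemma~\ref{caterpillar_first_letter_leaf}. The edge $(i, i+1)$ is $<_T$-minimal, since it is trivially minimal at $i$ and at $i+1$ every other neighbor $k$ satisfies $k <_{i+1} i$ because $i$ is the $<_{i+1}$-maximum of $[n]$. Therefore some linear extension of $<_T$ starts with $(i, i+1)$, which by Remark~\ref{ExtensionRemark} is an element of $F_n$ mapping to $T$ under $\Gamma$.

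To obtain a specific central such word, apply the induction hypothesis of Theorem~\ref{CenterThm2} to the non-crossing tree $T' = T \setminus \{i\}$ on $n-1$ vertices, yielding a central word $w'$ in the corresponding Hurwitz graph (isomorphic to $\mathcal{H}(S_{n-1})$ as noted in the parenthetical remark inside the proof of Theorem~\ref{CenterThm1}) with $\Gamma(w') = T'$, and set $w = (i, i+1) \cdot w'$. Minimality of $(i, i+1)$ in $<_T$ ensures that $<_{T'}$ agrees with the restriction of $<_T$ to edges of $T'$ (no $<_T$-increasing chain between edges of $T'$ can pass through the $<_T$-minimal edge $(i,i+1)$), so $w$ is a linear extension of $<_T$ and therefore $\Gamma(w) = T$. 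Centrality of $w$ then follows by copying the push argument from Theorem~\ref{CenterThm1} verbatim: for any $u \in \mathcal{H}(S_n)$, push the last transposition in $u$ that moves $i$ to the first position preserving $i$ (cost at most $n-2$), which by Lemma~\ref{caterpillar_first_letter_leaf} forces the new first letter to equal $(i, i+1)$; then invoke the induction hypothesis together with Theorem~\ref{TheoremHurwitzRadius} to bound the distance between the resulting length-$(n-2)$ suffixes by $\binom{n-2}{2}$, for a total of $n - 2 + \binom{n-2}{2} = \binom{n-1}{2}$, the radius of $\mathcal{H}(S_n)$.
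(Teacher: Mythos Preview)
Your proposal is correct and follows essentially the same route as the paper: isolate a leaf $i$ whose unique edge is $(i,i+1)$ or $(i-1,i)$, place that edge first (resp.\ last) in $w$, apply the induction hypothesis to $T\setminus\{i\}$, and verify centrality by the identical push argument from Theorem~\ref{CenterThm1}. The only real difference is in the auxiliary lemma guaranteeing such a leaf: the paper's Lemma~\ref{LemmaConsequentLeaf} is a two-line minimality argument (pick a leaf $i$ with neighbor $j$ minimizing $|i-j|$; if $|i-j|>1$, the subtree on the short arc between $i$ and $j$ supplies a leaf of $T$ with strictly smaller leaf--neighbor distance, contradiction), which is cleaner than your shortest-chord/inductive-descent sketch and avoids the case analysis you flag.
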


To prove this result, we need the following facts about geometric trees. 
\begin{lemma}\label{LemmaConsequentLeaf}
For every geometric tree $T$ there is a leaf $i$
such that either $(i-1\ i)$ or $(i\ i+1)$ is an edge
of $T$.
\end{lemma}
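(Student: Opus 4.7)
My plan is to consider, among all leaves of $T$, the one whose unique tree-edge has minimum cyclic distance between its two endpoints, and to show that this minimum distance must equal $1$. For two vertices $a, b \in [n]$, set $d_{\mathrm{cyc}}(a,b) = \min(|a-b|, n-|a-b|)$. For each leaf $\ell$ of $T$ with unique neighbor $m$ in $T$, let $d(\ell) = d_{\mathrm{cyc}}(\ell, m)$, and let $d = \min_\ell d(\ell)$. The lemma is equivalent to the assertion that $d = 1$, so I will derive a contradiction from $d \geq 2$.

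Suppose $d \geq 2$ and let $\ell$ be a leaf achieving the minimum, with neighbor $m$. After a cyclic relabeling, I may assume the shorter arc from $\ell$ to $m$ is clockwise, so $m = \ell + d$, and its interior consists of $\ell+1, \ldots, m-1$. The structural heart of the argument is to show that the induced subgraph $T'$ of $T$ on $S = \{\ell+1, \ell+2, \ldots, m\}$ is a subtree of $T$ with $d$ vertices. Because $T$ is non-crossing, any $T$-edge incident to some $v \in S \setminus \{m\}$ cannot cross $(\ell, m)$, so its other endpoint lies in $S \cup \{\ell\}$; since $\ell$ is a leaf whose only neighbor is $m \neq v$, in fact every $T$-neighbor of $v$ lies in $S$. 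Hence every vertex of $S \setminus \{m\}$ has the same degree in $T'$ as in $T$, and any $T$-path from such a vertex to a vertex outside $S$ must pass through $m$. Therefore $T'$ is connected, hence a subtree of $T$ on exactly $d$ vertices.

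Since $d \geq 2$, the tree $T'$ has at least two leaves; pick a leaf $\ell'$ of $T'$ with $\ell' \neq m$. By the degree equality above, $\ell'$ is also a leaf of $T$, and its unique $T$-neighbor $m'$ lies in $S$. Since both $\ell'$ and $m'$ lie in the arc $\{\ell+1, \ldots, m\}$ of cyclic length $d-1$, we get $d_{\mathrm{cyc}}(\ell', m') \leq d-1 < d$, contradicting the minimality of $d$. Hence $d = 1$, which is the conclusion of the lemma.

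The main obstacle I expect is the structural claim about $T'$: namely, formally justifying via non-crossingness that edges incident to vertices strictly inside the arc cannot escape across $(\ell,m)$, and deducing connectedness of $T'$. Everything after that is a routine distance/counting argument. The base cases $d = 2$ (where $T'$ has only two vertices) and small $n$ should fall out of the same reasoning without separate treatment.
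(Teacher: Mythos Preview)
Your proof is correct and follows essentially the same approach as the paper: choose a leaf whose edge has minimal (cyclic) distance between its endpoints, and if that distance exceeds $1$, extract a smaller leaf from the subtree induced on the short arc to derive a contradiction. Your version is simply more explicit about why the induced subgraph on the arc is a subtree and why its leaves are leaves of $T$, details the paper compresses into the word ``clearly.''
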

\begin{proof}
Pick a leaf $i$ in $T$ with neighbor $j$, such that the distance $d=|i-j|$ is minimal. If $d = 1$, we are done.
If not there is a leaf $k\neq j$ in the tree $S$ spanned by $j$ and the vertices strictly between $j$ and $i$ which is also a leaf of $T$. Clearly the distance of $k$ from its neighbor is less than $d$, a contradiction to $d$ being the minimal distance of leaf from its neighbor in $T$.
\end{proof}
The following fact is immediate from Definition \ref{TreeOrderDefinition}.
\begin{observation}\label{LemmaMinimalMaximalLeaf}
Let $T$ be a geometric tree and $i$ a leaf of $T$. If $t=(i\ i+1)$  is an edge of $T$, then $t$ is $<_T$-minimal and if $t= (i-1\ i)$ is an edge of $T$, then $t$ is $<_T$-maximal.
\end{observation}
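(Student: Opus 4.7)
The statement is an immediate unpacking of the definitions, so the plan is a short direct argument rather than anything requiring a clever idea. Recall that $<_T$ is the transitive closure of $\prec_T$, and that $(a\ b) \prec_T (a\ c)$ requires the two edges to share a vertex; so any $\prec_T$-chain witnessing minimality or maximality of an edge containing the leaf $i$ must, at its first step away from $i$, share the \emph{other} endpoint of that edge. This reduces both claims to checking a single $\prec_T$ inequality at one vertex.

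For minimality of $t=(i\ i+1)$: suppose for contradiction that $t$ is not $<_T$-minimal, so there exists an edge $e$ with $e <_T t$ and hence a $\prec_T$-chain $e = e_1 \prec_T \dots \prec_T e_m = t$. Since $i$ is a leaf, the only edge of $T$ containing $i$ is $t$ itself, so $e_{m-1}$ must share the vertex $i+1$ with $t$; write $e_{m-1} = (i+1\ j)$ with $j \neq i$. By Definition \ref{TreePreOrder} the relation $(i+1\ j) \prec_T (i+1\ i)$ demands $i <_{i+1} j$. But under $<_{i+1}$, which reads $i+1 <_{i+1} i+2 <_{i+1} \dots <_{i+1} i$, the element $i$ is the maximum, so no such $j$ exists. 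This is the contradiction, and $t$ is $<_T$-minimal.

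The case $t = (i-1\ i)$ is entirely analogous: any $\prec_T$-step out of $t$ must use the shared vertex $i-1$, giving $e_2 = (i-1\ j)$ with $j \ne i$, and $(i-1\ i) \prec_T (i-1\ j)$ would force $j <_{i-1} i$. Under $<_{i-1}$ we have $i-1 <_{i-1} i <_{i-1} i+1 <_{i-1} \dots$, so $i$ is the immediate successor of the minimum $i-1$; the only element strictly below $i$ is $i-1$ itself, which is ruled out since $j$ is a different endpoint. Hence $t$ is $<_T$-maximal.

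The only point worth watching, and the sole ``obstacle,'' is keeping track of the cyclic order $<_k$: the whole argument hinges on recognizing that $i$ sits at the top of $<_{i+1}$ and just above the bottom of $<_{i-1}$, which is exactly what prevents any $\prec_T$-step from extending the chain past $(i\ i+1)$ downward or past $(i-1\ i)$ upward. Since $i$ being a leaf was used only to force the shared vertex of any adjacent edge, no further structure of $T$ is needed.
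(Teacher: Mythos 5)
Your proof is correct. The paper states this observation without proof (it is introduced as ``immediate from Definition \ref{TreeOrderDefinition}''), and your argument is exactly the intended unpacking: since $i$ is a leaf, any $\prec_T$-step into $(i\ i+1)$ or out of $(i-1\ i)$ must pass through the other endpoint, and the cyclic orders $<_{i+1}$ and $<_{i-1}$ place $i$ at precisely the extreme positions that make such a step impossible.
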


We now prove Theorem \ref{CenterThm2}. 
\begin{proof}
Again, we prove the theorem by induction on $n$, the number of vertices.
Clearly, the statement is true for $n = 1, 2, 3$. Assume that the statement is correct
for every $m<n$.

Let $T$ be a geometric tree with $n$ edges.
By Lemma \ref{LemmaConsequentLeaf}
there is a leaf $i$ such that either $(i\ i+1)$ or $(i\ i-1)$ is an edge in $T$. 
Without loss of generality, assume that $(i\ i+1)$ is an edge in $T$. The case of $(i\ i-1)$ will be dealt with later.

By Observation \ref{LemmaMinimalMaximalLeaf}, $(i\ i+1)$ is $<_T$-minimal. 
We proceed to build the word $w$ as follows. Set $(i\ i+1)$ to be the first letter in $w$.
By removing $i$ from the vertex set of $T$ and $(i\ i+1)$ from the edge set of $T$ we a obtain a 
non-crossing geometric tree $T'$ on $[n] \setminus \{i\}$.
By the induction hypothesis, there is a central vertex $w'$ of $\mathcal{H}\left(S_{[n] \setminus \{i\} }\right)$
such that $\Gamma(w')=S$.

We prove that $w = \left( \left(i\ i+1\right), w'\right)$ is the required central word in $\mathcal{H}(S_n)$.
For every $u\in F_n$, we pick the last letter $t$
of $u$ containing $i$ and push it to the first place, preserving $i$.
This action requires at most $n-2$ steps and we obtain a new word $u'=(s'_1,\dots,s'_{n-1})$, such that
$\Gamma\left(\left(s'_2,\dots,s'_{n-1}\right)\right)$ is a geometric tree on $[n] \setminus \{i\}$, by Lemma \ref{CtrPlrEdgeLemma1}, $s'_1=(i\ i+1)$.
Now $w'$ is central, hence the distance of $w'$ from $(s'_2,\dots,s'_{n-1})$ is at most ${n-2 \choose 2}$.
Therefore the distance of $u$ from $w$ is at most $n-2 + {n-2 \choose 2}={n-1 \choose 2 }$. 

If there is no leaf $i$ such that $(i\ i+1)$ is edge in $T$, we repeat the process with $(i-1\ i)$, but this time we build a word $w$ such that $(i-1\ i)$ is the last letter
of $w$.
\end{proof}

\newpage{}
\section{Extension of the Hurwitz graph}
\subsection{Basic definitions}
The Hurwitz graph that was studied in the previous section can be extended.
The motivation for the definition is to look at $F_n$ and have an edge between two words 
if they have a common sub-word of length $n-2$.
Formally, the \emph{extended Hurwitz action} and the \emph{extended Hurwitz graph} are defined in a way similar to 
Definitions \ref{Hurwitz_Action} and \ref{HurwitzGraph}.
\begin{definition}\label{ExtendedHurwitzAction}
The {\em extended Hurwitz actions} for $i<j$ are defined as follows:
\begin{enumerate}
\item Extended left Hurwitz action: $L_{ij}\left(w\right):=L_{j-1}L_{j-2}\dots   L_{i}\left(w\right)$
\item Extended right Hurwitz action: $R_{ij}\left(w\right) :=R_{i}R_{i+1}\dots   R_{j-1}\left(w\right)$
\end{enumerate}
\end{definition}
Recall that $t^s = s^{-1}ts$, namely conjugating $t$ by the inverse of $s$.
\begin{remark}
The extended Hurwitz actions can be understood as follows. If $w=(t_1,\dots   ,t_{n-1})$ is a word in $F_n$, 
then 
$$L_{ij}(w) = (t_1,\dots  , t_{i+1},\dots  , t_j,t_i^{t_{i+1}\cdots t_j} ,\dots   ,t_{n-1}).$$
Namely, if the sub-word of $w$ that 
begins with the $i$-th letter and ends with the $j$-th letter is $(t_i, U)$, then the corresponding sub-word of $L_{ij}(w)$  is $(U, t_i^{U})$. The sub-word $U$ is moved to the left, and the letter $t_i$, conjugated 
by $U^{-1}$, is moved to the right.

$R_{ij}$ works in a similar way:
$$R_{ij}(w) = (t_1,\dots,t_j^{t_{j-1}\cdots t_i},t_i,\dots  ,t_{j-1},\dots   ,t_{n-1}). $$
Namely, if the sub-word of $w$ that 
begins with the $i$-th letter and ends with the 
$j$-th letter is $(U, t_j)$, then the corresponding sub-word of $L_{ij}(w)$ 
is $(t_j^{U^{-1}}, U)$. The sub-word $U$ is moved to right, 
and the letter $t_j$ conjugated by the sub-word $U$ is moved to the left.
\end{remark}
\begin{remark}
When we apply $L_{ij}$ to a word, we say that we \emph{pull} the $i$-th letter \emph{to the right}. We
also say that $L_{ij}(w)$ is obtained from $w$ by pulling the $i$-th letter of $w$ to the $j$-th place.
When we apply $R_{ij}$ to a word, we say that we \emph{pull} the $j$-th letter \emph{to the left}.
We also say that $R_{ij}(w)$ is obtained from $w$ by pulling the $j$-th letter of $w$ to the $i$-th place.
\begin{observation}
An extended Hurwitz action, applied to a word $w$,
alters the graph $\Gamma(w)$ in at most one edge.
\end{observation}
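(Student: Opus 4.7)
The plan is to unwind the extended Hurwitz action as a composition of ordinary Hurwitz actions and to track, step by step, which letters of the word (equivalently, which edges of $\Gamma(w)$) actually get modified. The key point is that although $L_{ij}$ is a long composition $L_{j-1}L_{j-2}\cdots L_i$, almost all of its internal effect is a \emph{rearrangement} of letters; only one transposition is ever genuinely replaced, and this is the letter that was originally at position $i$.

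Concretely, I would prove by induction on $k$ (with $i \leq k < j$) the following statement: if $w = (t_1,\dots,t_{n-1})$, then the word $L_k L_{k-1}\cdots L_i(w)$ has the form
\[
\bigl(t_1,\dots,t_{i-1},\ t_{i+1},\dots,t_{k+1},\ t_i^{\,t_{i+1}\cdots t_{k+1}},\ t_{k+2},\dots,t_{n-1}\bigr).
\]
The base case $k=i$ is exactly the definition of $L_i$, which replaces the pair $(t_i,t_{i+1})$ by $(t_{i+1}, t_i^{t_{i+1}})$. For the inductive step, applying $L_{k+1}$ to the inductive form acts only on the letters at positions $k+1$ and $k+2$, namely $t_{k+1}$ and $t_i^{t_{i+1}\cdots t_{k+1}}$; it swaps them and conjugates the latter further by $t_{k+2}$, yielding exactly the claimed form one step higher. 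Specializing to $k=j-1$ gives the description of $L_{ij}(w)$.

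From this description, the multiset of transpositions appearing in $L_{ij}(w)$ differs from that of $w$ in at most one element: the transposition $t_i$ is removed and the transposition $t_i^{\,t_{i+1}\cdots t_j}$ is inserted. Since the edges of $\Gamma(\cdot)$ are determined by this multiset, at most one edge of $\Gamma(w)$ is altered (and none, if the conjugation happens to leave $t_i$ fixed). The argument for $R_{ij}$ is entirely symmetric: an induction on the partial composition $R_i R_{i+1}\cdots R_k$ shows that $R_{ij}$ replaces only the letter originally at position $j$, by $t_j^{\,t_{j-1}\cdots t_i}$, so again at most one edge changes.

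There is no real obstacle here beyond careful index bookkeeping; the only place one must be vigilant is in checking that when a new conjugate is produced at one step, the very next $L$ (or $R$) in the composition acts on exactly that position, so that no previously untouched letter $t_m$ with $m \notin \{i\}$ is ever modified, only shifted by one place.
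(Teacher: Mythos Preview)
Your argument is correct and follows essentially the same route the paper takes: the paper records in the Remark just before this Observation the explicit form $L_{ij}(w) = (t_1,\dots,t_{i-1},t_{i+1},\dots,t_j,t_i^{t_{i+1}\cdots t_j},t_{j+1},\dots,t_{n-1})$ (and similarly for $R_{ij}$), from which the Observation is immediate; your induction on $k$ is precisely a verification of that Remark.
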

\begin{observation}
For every $w\in F_n$ and $1\leq i < j \leq n-1$, the
following relation holds: 
$$R_{ij}L_{ij}\left(w\right)=L_{ij}R_{ij}\left(w\right)=w$$
\end{observation}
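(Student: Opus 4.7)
The plan is to reduce the claim to the already-noted identity $L_i = R_i^{-1}$ from Definition \ref{Hurwitz_Action}. First I would expand both compositions using Definition \ref{ExtendedHurwitzAction}:
\[
R_{ij}L_{ij} = R_i R_{i+1} \cdots R_{j-1} \cdot L_{j-1} L_{j-2} \cdots L_i,
\]
\[
L_{ij}R_{ij} = L_{j-1} L_{j-2} \cdots L_i \cdot R_i R_{i+1} \cdots R_{j-1}.
\]
In each product the two middle factors are $R_{j-1} L_{j-1}$ (respectively $L_i R_i$), which equal the identity by $L_k = R_k^{-1}$.

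Next I would formalize a telescoping cancellation from the inside out. For the first product, I would prove by induction on $k = j - i \geq 1$ that
\[
R_i R_{i+1} \cdots R_{i+k-1} \cdot L_{i+k-1} L_{i+k-2} \cdots L_i = \mathrm{id},
\]
with the base case $k=1$ being exactly $R_i L_i = \mathrm{id}$, and the inductive step obtained by cancelling the innermost pair $R_{i+k-1} L_{i+k-1}$ and applying the induction hypothesis to the remaining product $R_i \cdots R_{i+k-2} \cdot L_{i+k-2} \cdots L_i$. The same induction, with $L$ and $R$ exchanged, handles $L_{ij}R_{ij} = \mathrm{id}$.

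The main obstacle, if one can call it that, is just keeping track of the indices, since the $L$'s appear in decreasing order and the $R$'s in increasing order; the symmetric structure of Definition \ref{ExtendedHurwitzAction} is precisely what makes the innermost cancellation possible at every stage. No facts beyond $L_k = R_k^{-1}$ and associativity of composition of functions on $F_n$ are needed, so the proof will be short.
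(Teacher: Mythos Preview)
Your argument is correct and is exactly the intended one: the paper records this as an Observation with no proof, since it follows immediately from $L_k = R_k^{-1}$ (Definition~\ref{Hurwitz_Action}) by the telescoping cancellation you describe after expanding $R_{ij}$ and $L_{ij}$ via Definition~\ref{ExtendedHurwitzAction}.
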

\end{remark}
\begin{definition} \label{ExtendedHurwitzGraph}
The {\em extended Hurwitz graph},
denoted by $\mathcal{E}(S_n)$, is defined as follows. The vertex set
is $F_{n}$.
Two words $w_{1}$ and $w_{2}$ are adjacent if there exist $1\leq i<j\leq n-1$
such that either $w_{1}=R_{ij}(w_{2})$ or $w_{1}=L_{ij}(w_{2})$.
\end{definition}
\begin{remark}
The extended Hurwitz graph is undirected.
\end{remark}

\subsection{Radius and diameter of $\mathcal{E}(S_n)$}
In this section we calculate the radius of the extended Hurwitz graph, 
and give bounds on its diameter. See Section \ref{GraphPrelims} for 
the definitions.
\begin{definition}
We say that $w \in F_n$ is a \emph{star} if $\Gamma(w)$ is a star. $\mathcal{S}_i$ denotes the word $w \in F_n$
such that $\Gamma(w)$ is the star centered at $i$.
\end{definition}
\begin{theorem}\label{Extended-Radius}
 The radius of the extended Hurwitz graph $ \mathcal{E}(S_n)$ is $n-2$, and every star is a central 
 vertex in $ \mathcal{E}(S_n)$.
\end{theorem}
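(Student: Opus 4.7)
The plan is to prove $\mathrm{rad}(\mathcal{E}(S_n)) = n-2$ via matching lower and upper bounds; the centrality of every star then follows, since they realize the minimum eccentricity.

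For the lower bound, I would invoke the observation preceding Definition~\ref{ExtendedHurwitzGraph}: each extended Hurwitz action alters $\Gamma(w)$ in at most one edge, hence $d(u,v) \geq |E(\Gamma(u)) \setminus E(\Gamma(v))|$ for any $u,v \in F_n$. Given any $v \in F_n$, pick a leaf $i$ of $T := \Gamma(v)$. Then $T$ contains exactly one edge incident to $i$, while $\mathcal{S}_i$ has $n-1$, so $|E(\mathcal{S}_i) \setminus E(T)| = n-2$ and $d(v,\mathcal{S}_i) \geq n-2$. Hence every vertex has eccentricity at least $n-2$, and $\mathrm{rad}(\mathcal{E}(S_n)) \geq n-2$.

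For the upper bound I would prove the sharper claim $d(w, \mathcal{S}_i) \leq (n-1) - \deg_{\Gamma(w)}(i)$ for every $w \in F_n$ and every $i \in [n]$; since $\deg_{\Gamma(w)}(i) \geq 1$ for any spanning tree of $[n]$, this is at most $n-2$, giving $\mathrm{ecc}(\mathcal{S}_i) \leq n-2$. I would argue by induction on the number $k$ of non-$i$-incident edges of $T = \Gamma(w)$. The base case $k=0$ forces $T = \mathcal{S}_i$; the cyclic-decreasing property of Theorem~\ref{GouldenYongTheorem}(2), applied to transpositions all sharing the vertex $i$, then pins down $w = \mathcal{S}_i$. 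For the inductive step with $k \geq 1$, I would identify a non-$i$-incident edge $e = (a,b) \in E(T)$ adjacent to an $i$-incident edge $(i,a) \in E(T)$, with $b$ chosen to be a leaf of $T$. Letting $p$ and $q$ denote the positions of $e$ and $(i,a)$ in $w$, I would apply $R_{qp}$ (if $q < p$) or $L_{pq}$ (if $p < q$), conjugating $e$ by the intervening sub-word $U$. Using the identity $(i,a)(a,b)(i,a) = (i,b)$, provided $U$ acts on $\{a,b\}$ exactly as $(i,a)$ does (so $U(i) = a$ and $U(b) = b$), the new transposition equals $(i,b)$. The resulting word $w'$ has $\Gamma(w') = T - e + (i,b)$ and only $k - 1$ non-$i$-incident edges, and the induction hypothesis closes the bound at $k$.

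The main obstacle is verifying the action of $U$ on $\{a,b\}$. Since $b$ is a leaf of $T$, no other transposition in $w$ involves $b$, so $U$ fixes $b$ automatically. The delicate point is ensuring $U(i) = a$: the transpositions in $w$ between positions $q$ and $p$ that involve $a$ are precisely the other edges of $T$ incident to $a$, and their positions relative to $(i,a)$ are dictated by the cyclic order $<_a$ on $a$'s neighbors, by Theorem~\ref{GouldenYongTheorem}(2). By selecting $b$ to be the cyclic $<_a$-successor of $i$ among $a$'s neighbors in $T$, one forces no other $a$-containing transposition to lie strictly between $e$ and $(i,a)$ in $w$, and a short computation then gives $U(i) = a$. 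Checking that such a leaf $b$ can always be chosen, and handling the residual cases (such as the path, where no depth-$2$ vertex is a leaf) by working with an alternative non-$i$-edge and refining the choice of $e$ after each inductive step, is the key combinatorial work that closes the argument.
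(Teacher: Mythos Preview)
Your lower bound is correct and matches the paper: each extended Hurwitz move changes at most one edge of $\Gamma(w)$, and for any $v$ the star at a leaf of $\Gamma(v)$ differs from $\Gamma(v)$ in $n-2$ edges.

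Your upper bound has the same inductive skeleton as the paper (reduce the number of non-star edges one at a time), but the inductive step as written does not go through. You require an edge $(a,b)$ with $(i,a)\in E(T)$ such that $b$ is a leaf \emph{and} $b$ is the cyclic $<_a$-neighbour of $i$ among the neighbours of $a$ (so that no other $a$-edge lies between $(i,a)$ and $(a,b)$ in $w$). These two constraints are in general incompatible: already for the path $i,i+1,\dots$ there is no leaf at distance~$2$ from $i$, and even when there is, it need not be the cyclic successor. You acknowledge this, but ``working with an alternative non-$i$-edge and refining the choice after each step'' is not an argument; it is exactly the content that is missing. Without it you cannot guarantee that a single extended Hurwitz move converts some non-$i$-edge into an $i$-edge, which is what the induction needs.

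The paper avoids this obstruction by dropping the leaf hypothesis on $b$ altogether. Its key Lemma~\ref{ExtendedpullLemma1} fixes a neighbour $i$ of $1$ that is not a leaf, takes $j$ to be the $<_1$-extremal neighbour of $i$, and shows by a nested induction on $|j-i|$ that pulling $(i,j)$ all the way to the position just before $(1,i)$ produces a letter of the form $(1,k)$ new to $w$. The point is that along the pull the letter may get conjugated several times (first to some $(i,l)$, then further), and the inner induction tracks this chain of conjugations rather than trying to arrange a single clean conjugation by $(1,i)$. Once that lemma is in hand, the outer induction on the number $d$ of non-star letters goes through immediately, and $d\le n-2$ since every tree shares at least one edge with $\mathcal{S}_1$. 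The fix to your approach, then, is not to insist on $b$ being a leaf but to control the full sequence of conjugations via an inner induction of this kind.
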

To prove the theorem, we need the following lemma.
\begin{lemma}\label{ExtendedpullLemma1}
Let $w\in F_n$ such that $(1\ i)$ is a letter in $w$ and $i$ is not a leaf in $\Gamma(w)$. Then there exists
$j$ such that $(i\ j)$ is a letter in $w$, and pulling $(i\ j)$ results in a word $w'$ that has, instead of $(i\ j),$ a letter $(1\ k)$
which is not a letter in $w$. 
\end{lemma}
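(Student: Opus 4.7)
The plan is to find a neighbor $j$ of $i$ in $\Gamma(w)$ with $j\neq 1$ such that pulling the edge $(i\ j)$ across $(1\ i)$ in $w$ yields a conjugated letter of the form $(1\ k)$ not already in $\Gamma(w)$. Since $i$ is not a leaf, $\deg(i)\geq 2$, so at least one such neighbor $j$ exists.

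First I would locate the positions $q_1<\cdots<q_m$ of the letters of $w$ containing $i$. Because $(1\ i)=t_{q_r}$ for some $r$ and $m\geq 2$, at least one of $q_{r-1}$, $q_{r+1}$ is defined; I pick $j$ so that $(i\ j)$ sits at such a neighboring position. The crucial property of this choice is that every letter strictly between $(1\ i)$ and $(i\ j)$ in $w$ fixes $i$. I then pull $(i\ j)$ across $(1\ i)$ via the corresponding $R_{q_r,q_{r+1}}$ or $L_{q_{r-1},q_r}$. Writing $W$ for the product of the intermediate letters, a direct computation yields the pulled letter as $(1\ i)\,(W(i),W(j))\,(1\ i)$. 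Since $W$ fixes $i$, this simplifies to $(1,W(j))$, provided $W(j)\neq 1$; otherwise the pulled letter equals $(1\ i)$, duplicating an edge and contradicting the fact that the resulting word lies in $F_n$.

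It remains to verify that $(1\ k):=(1,W(j))$ is not already an edge of $\Gamma(w)$. An extended Hurwitz action changes $\Gamma(w)$ in at most one edge, so $\Gamma(w')=\Gamma(w)-(i\ j)+(1\ k)$. Removing $(i\ j)$ splits $\Gamma(w)$ into a component $A$ containing $i$ (and therefore also $1$, via the surviving edge $(1\ i)$) and a component $B$ containing $j$. For $\Gamma(w')$ to be a tree, $(1\ k)$ must reconnect $A$ and $B$, forcing $k\in B$. But every neighbor of $1$ in $\Gamma(w)$ lies in $A$, since each such connecting edge differs from $(i\ j)$ and survives the removal; hence $(1\ k)$ is not a letter of $w$. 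The delicate point of the argument is the choice of $j$: without insisting that the intermediate letters fix $i$, the pulled letter need not have $1$ as an endpoint, and the computation above collapses.
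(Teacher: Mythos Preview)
Your argument is correct, and it is genuinely different from (and cleaner than) the paper's. The paper fixes $j$ to be the $<$-maximal neighbor of $i$ (in the case $i<j$) and then proves by induction on $j-i$ that pulling $(i\ j)$ to the position immediately preceding $(1\ i)$ yields a letter of the form $(1\ k)$; the induction step locates the minimal neighbor $l$ of $j$ in $(i,j)$, shows that $(j\ l)$ is the last non-commuting obstacle, and factors the pull through the intermediate letter $(i\ l)$. By contrast, you choose $j$ so that $(i\ j)$ is the letter containing $i$ whose position is adjacent to that of $(1\ i)$; this guarantees that every intermediate letter fixes $i$, so the conjugate is $(1\ i)\,(i\ W(j))\,(1\ i)=(1\ W(j))$ in one step, with no induction needed. (In the $L_{q_{r-1},q_r}$ case the conjugating word is $W^{-1}$ rather than $W$, but this is harmless since $W^{-1}$ also fixes $i$.) Your component argument for ``$(1\ k)\notin\Gamma(w)$'' is also tidier than the paper's somewhat implicit justification. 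The paper's choice of $j$ gives slightly more explicit control over which $k$ appears, but your approach is shorter and isolates the key mechanism more transparently.
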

\begin{proof}
Since $i$ is not a leaf in $\Gamma(w)$, there is $j\neq 1$ such that $(i\ j)$ is an edge in $\Gamma(w)$, 
or equivalently, a letter in $w$. We consider the case of $i < j$ first.

We can always assume that $j$ is the maximal element of $[n]$ such that $(i\ j)$ is a letter in $w$. Otherwise we replace it with the maximal.
 
We claim that pulling $(i\ j)$ to the left,  to the position
preceding $(1\ i)$, results in a word $w'$ with a letter $(1\ k)$ which is not a letter in $w$. 
Note that by assumptions $(1\ i) <_{\Gamma(w)}$ $(i\ j)$.

We prove this by induction on $j-i$. If $j-i = 1$
 (or more generally: if there is no $h$ such that 
$i < h < j$ and $(j\ h)$ is 
a letter in $w$), we show that $(i\ j)$ commutes with with every $(k\ l)$ such that  
$(1\ i) <_{\Gamma(w)} (k\ l) <_{\Gamma(w)} (i\ j)$ (which means that $(k\ l)$ is a letter between
$(1\ i)$ and $(i\ j)$ in $w$).

Assume otherwise. Then either $l = i$ or $l = j$.
If $l = i$ then $(1\ i) <_{\Gamma(w)} (i\ k) <_{\Gamma(w)} (i\ j)$ implies $j <_i k <_i 1$, equivalently $j < k \leq n$, a contradiction to $j$ being the maximal neighbor of $i$ in $\Gamma(w)$.

If $l = j$ then $(j\ k) <_{\Gamma(w)} (i\ j)$ 
implies
$i <_j k$, equivalently $i < k < j$, a contradiction 
to the assumption that there are no such neighbors 
of $j$.
Hence every letter between
$(1\ i)$ and $(i\ j)$ commutes with $(i
 j)$. Thus if we pull $(i\ j)$ to the position 
 preceding $(1\ i)$, $(i\ j)$ turns into  $(1\ j)$ which
is not a letter in $w$ (since $(1\ i)$ and $(i\ j)$
are letters, and $\Gamma(w)$ is a tree).

Suppose that the statement holds whenever $j-i<m$. Then we show that it holds when $j-i=m$. Let $l$ be minimal neighbor of $j$ in the  interval $[i+1\dots   j-1]$. (If there is no such $l$ the proof is as in case  $j-i=1$.) 
We claim that $(j\ l)$ is the largest letter
between $(1\ i)$ and $(i\ j)$ that doesn't commute with $(i\ j)$. 
 
The only letters between $(1\ i)$ and $(i\ j)$ that do not
commute with $(i\ j)$ are the ones that involve $j$. (The case of $i$ is excluded as in the proof of case $j=i+1$.)
Again, if $i < l < h < j$ and $h$ is a neighbor of $j$, then $(j\ h) <_{\Gamma(w)} (j\ l) <_{\Gamma(w)}(i\ j)$ contradicting the choice of $l$. 

If $(j\ l)<(i 1)$, then there are no letters between $(1\ i)$ and $(i\ j)$, and we continue as in the case of $j = i+1$. Otherwise every letter between $(j\ l)$ and $(i\ j)$
commutes with $(i\ j)$. Pulling $(i\ j)$ immediately before $(j\ l)$ we obtain a new word $w'$ with $(i\ l)$ before $(j\ l)$. 
But $0<l-i < j-i$, hence we can apply the induction hypothesis and pull $(i\ l)$ immediately before $(1\ i)$ to obtain $(1\ k)$ immediately before $(1\ i)$.
But pulling $(i\ j)$ before $(j\ l)$ to obtain $(i \ l)$ and then pulling $(i\ l)$ before $(1\ i)$ to obtain $(1\ k)$ is same as pulling $(i\ j)$ before 
$(1\ i)$. 

Finally, if $i > j$, we apply the same  argument with opposite direction of inequalities.
\end{proof}

\begin{remark}
The lemma remains valid if we replace $1$ by any $i$. The proof is exactly the same, except that $<$ is replaced by $<_i$.
\end{remark}
Now we prove Theorem \ref{Extended-Radius}.
\begin{proof}
 First we prove the theorem for the star $\mathcal{S}_1 = (1\ n)\dots   (1\ 2)$. Let $w\in  \mathcal{E}(S_n)$. 
 We show that the distance of $w$ from $\mathcal{S}_1$ is equal to $d$, the number 
 of letters in $w$ that are not in $\mathcal{S}_1$. It is clear that the distance is at 
 least $d$, because every 
 extended Hurwitz action changes at most one edge in $\Gamma(w)$. 
 Now we shall show that the distance is at most $d$. 
 
 We proceed by induction on $d$. For $d = 1$, it means that the word $w$ has $n - 2$ common 
 letters with $\mathcal{S}_1$. Thus it must 
 have a unique letter $(i\ j)$ with $i, j \neq 1$. It means that either $(1\ i)$ or $(1\ j)$ (but not both) is a letter 
 in $w$. 
 Without loss of generality suppose that $(1\ i)$ is a letter in $w$. Now we apply Lemma
 \ref{ExtendedpullLemma1}, and obtain a new word $w'$ with $(i\ j)$ replaced by the 
 a $(1\ k)$ that is not in $w$ (namely $k=j$). The other letters in $w$ are unaffected by the pull, and all of them are also 
 letters of $\mathcal{S}_1$, hence all the letters in the new word 
 and $\mathcal{S}_1$ are equal. Since $<_{\Gamma(\mathcal{S}_1)}$ is a linear order by Lemma \ref{star-order-lemma},
 the words are equal.
 
 Now suppose that the claim holds for all words 
 which differ from $S_1$ in $m < d$ letters. Let $w$ be a word such that 
 $w$ and 
 $\mathcal{S}_1$ differ in exactly
 $d$ letters. The word $w$ must have a letter of the form $(1\ i)$ such that $i$ is not a leaf in $\Gamma(w)$. 
 We again apply Lemma \ref{ExtendedpullLemma1} to obtain a new word $w'$ with letter $(1\ k)$ that is not 
 a letter in $w$. The word $w'$ therefore has $d-1$ letters that  are not in $\mathcal{S}_1 \in \mathcal{E}\left(S_n\right)$, hence (by the induction hypothesis) the distance of $w'$ from
 $\mathcal{S}_1$ is $d-1,$ and so the distance between $w$ and $\mathcal{S}_1$ is at most $d-1+1 = d$.
 
 Note that for every $w \in F_n$, $w$ and $\mathcal{S}_1$ have at least one common letter, hence the distance
 between $w$ and $\mathcal{S}_1$ is at most $n-2$. Therefore the radius of $\mathcal{E}\left(S_n\right)$ is at most $n-2$.
 
 Also, for every $w \in F_n$ there is $u \in F_n$ such that 
 $u$ and $w$ have at least $n-2$ different letters,
 for example $S_i$ for every leaf $i$ in $w$. Hence the radius of $\mathcal{E}\left(S_n\right)$ as at least $n-2$.
 To prove that every star in $\mathcal{E}\left(S_n\right)$ is central, we apply the same arguments as before to $\mathcal{S}_i$,
 replacing $1$ by $i$ and $<$ by $<_i$. 
\end{proof}
 
Next, we find bounds for the diameter of the extended Hurwitz graph. To do that 
we need a theorem by Hernando, Hurtado, Marquez, Mora and Noy \cite{HHMM} and two lemmas
that establish a connection between $\mathcal{E}(S_n)$ and its image in $\mathcal{G}_n$.

\begin{theorem}[\cite{HHMM}, Theorem 3.5]\label{DiameterOfGeometricTreeGraph}
The diameter of $\mathcal{G}_n$ is at least $\frac{3}{2}n - 5$.
\end{theorem}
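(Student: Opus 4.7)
The plan is to establish the lower bound by exhibiting an explicit pair of non-crossing geometric trees $T^{*}, S^{*}$ of order $n$ and showing that every sequence of single edge-flips transforming $T^{*}$ into $S^{*}$ uses at least $\frac{3}{2}n - 5$ steps. A first, naive attempt — take $T^{*}$ to be the boundary path $1$-$2$-$\cdots$-$n$ and $S^{*}$ to be any maximally ``zigzag'' non-crossing Hamilton path — only yields a lower bound of $n-1$ from counting symmetric difference of edge sets, which is insufficient. The $\frac{3}{2}$ factor must therefore come from the non-crossing constraint, which forces intermediate trees along any flip sequence to contain certain edges in neither $T^{*}$ nor $S^{*}$.

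My first step would be to introduce a potential function $\phi$ on non-crossing geometric trees with the two properties: $|\phi(T) - \phi(T')| \le 1$ whenever $T$ and $T'$ differ by a single edge-flip, and $\phi(T^{*}) - \phi(S^{*}) \geq \frac{3}{2}n - 5$ for the chosen pair. The most promising form of $\phi$ is a weighted edge sum, where the weight of an edge $(a\ b)$ reflects either its ``geometric length'' on the circle or the number of chords of a fixed reference family that it crosses. This should be combined with a degree-sequence term: since a single flip alters the degrees of at most four vertices, a potential of the form $\sum_{v} f(\deg_T(v))$ for a carefully chosen convex $f$ changes by a bounded amount per flip and can contribute an additional additive term to the gap $\phi(T^{*}) - \phi(S^{*})$.

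The candidate pair I would try is two ``far-apart'' caterpillars whose spines are disjoint cyclic intervals — one whose spine lies in $\{1,\dots,\lceil n/2\rceil\}$ and one whose spine lies in $\{\lceil n/2\rceil,\dots,n\}$, with leaves distributed so that the degree profiles are maximally incompatible. By Theorem~\ref{CtrPlrThm1} these are exactly the trees whose edge posets $<_{T}$ are linear, so they are combinatorially rigid and their symmetric difference is forced to be large. The argument then splits into two parts: (a) count the number of edges that must be removed and added, contributing roughly $n$ to $\phi(T^{*}) - \phi(S^{*})$, and (b) account for the ``obstruction cost'' — each new long chord of $S^{*}$ can be inserted only after every chord of the current tree that crosses it is removed, and removing such a chord without disconnecting the tree forces the introduction of a temporary replacement edge which is in neither endpoint tree.

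The main obstacle is calibrating $\phi$ so that (a) and (b) combine to yield exactly the coefficient $\frac{3}{2}$ rather than $1$ or $2$. The delicate point is that a single flip can simultaneously remove one ``wrong'' edge and insert a ``right'' edge — so naive edge accounting over-counts the achievable progress per step. To pin down the coefficient, I would analyze the effect of a flip case-by-case according to whether the removed and added edges are in $T^{*}$, in $S^{*}$, in both, or in neither, and show that the best case still only advances $\phi$ by $\frac{2}{3}$ amortized across any three consecutive flips. This amortization — rather than a per-step bound — is what I expect to be the technical heart of the proof.
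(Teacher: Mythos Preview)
The paper does not prove this statement at all: it is quoted verbatim as Theorem~3.5 of Hernando, Hurtado, M\'arquez, Mora and Noy~\cite{HHMM} and used as a black box in the proof of Theorem~\ref{Extended-Diameter}. There is therefore no ``paper's own proof'' for you to match.

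As to the proposal itself, it is a research plan rather than a proof. You candidly identify the main obstacle --- calibrating the potential $\phi$ so that the coefficient comes out to $\tfrac{3}{2}$ --- and leave it open; the suggested ``amortized $\tfrac{2}{3}$ across any three consecutive flips'' is a hope, not an argument, and no lemma in the write-up supports it. The concrete pair you propose (two boundary caterpillars with spines on complementary half-circles) has edge symmetric difference of order $n$, and without the missing amortization step nothing in your outline rules out a flip sequence of length roughly $n$ between them. If you want an actual proof you should consult~\cite{HHMM}: their argument exhibits two specific non-crossing spanning trees (a zig-zag path and a star-like tree) and bounds the distance by tracking, along any flip sequence, how many interior diagonals must be created and later destroyed --- a direct combinatorial count rather than a potential-function argument.
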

\begin{lemma}\label{ImageOfExtendedInGnNeighbors}
For any two words $w, v \in F_n$, if $w, v$ are neighbors in $\mathcal{E}(S_n)$ then $\Gamma(w)$ and $\Gamma(v)$ 
are either neighbors or equal in $\mathcal{G}_n$.
\end{lemma}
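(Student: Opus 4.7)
The plan is to leverage directly the observation already stated in the paper, namely that a single extended Hurwitz action $L_{ij}$ or $R_{ij}$ alters the edge set of $\Gamma(w)$ in at most one edge. This is essentially the content we need; the remaining work is a short edge-counting argument.

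First I would recall that $w$ and $v$ are neighbors in $\mathcal{E}(S_n)$ means $v = L_{ij}(w)$ or $v = R_{ij}(w)$ for some $1 \le i < j \le n-1$. By the observation preceding Definition \ref{ExtendedHurwitzGraph}, the edge sets of $\Gamma(w)$ and $\Gamma(v)$ differ in at most one edge. Concretely, from the explicit description in the remark, the multiset of transpositions appearing in $L_{ij}(w)$ differs from that of $w$ only in that the letter $t_i$ is replaced by its conjugate $t_i^{t_{i+1}\cdots t_j}$ (and symmetrically for $R_{ij}$), so at most one edge is removed and one is added.

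Next I would split into two cases. If the two edge sets coincide, then $\Gamma(w) = \Gamma(v)$ as geometric graphs, so they represent the same vertex in $\mathcal{G}_n$. Otherwise, since by Theorem \ref{GouldenYongTheorem} both $\Gamma(w)$ and $\Gamma(v)$ are non-crossing geometric trees on $[n]$ and therefore each has exactly $n-1$ edges, a single edge $e$ lying in $\Gamma(w)$ but not in $\Gamma(v)$ forces, by cardinality, exactly one edge $f$ lying in $\Gamma(v)$ but not in $\Gamma(w)$. Then $\Gamma(v) = \Gamma(w) - e + f$, which is precisely the adjacency relation defining $\mathcal{G}_n$.

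The argument is essentially immediate once the observation is in hand, so there is no real obstacle; the only thing that needs a brief justification is why the extended action changes exactly the letter $t_i$ and leaves the rest of the multiset of transpositions intact, which is transparent from the explicit form $L_{ij}(w) = (t_1,\dots,t_{i-1}, t_{i+1},\dots,t_j, t_i^{t_{i+1}\cdots t_j}, t_{j+1}, \dots, t_{n-1})$ spelled out in the remark following Definition \ref{ExtendedHurwitzAction}.
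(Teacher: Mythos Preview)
Your proposal is correct and follows essentially the same approach as the paper's own proof: both rely on the observation that a single extended Hurwitz action changes at most one letter, hence at most one edge of $\Gamma(w)$, so $\Gamma(w)$ and $\Gamma(v)$ are either equal or adjacent in $\mathcal{G}_n$. Your version is simply more explicit, spelling out the cardinality argument that the paper leaves implicit.
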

\begin{proof}
Every extended Hurwitz action alters at most one letter, which means that $\Gamma(w)$ and $\Gamma(v)$ have at most 
one different edge, which means that they are either neighbors or equal in $\mathcal{G}_n$. 
\end{proof}
\begin{lemma}\label{ImageOfExtendedIsOnto}
The mapping $\Gamma: F_n \rightarrow \mathcal{G}_n$ is onto.
\end{lemma}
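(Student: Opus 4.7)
The plan is to reduce this to the Goulden–Yong characterisation (Theorem \ref{GouldenYongTheorem}) combined with the partial order $<_T$ already built in Section 2.3. Given a non-crossing geometric tree $T$ on $[n]$, I want to exhibit an ordering $(t_1,\dots,t_{n-1})$ of its $n-1$ edges whose associated word lies in $F_n$ and maps to $T$ under $\Gamma$.

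First I would invoke Theorem \ref{TreeOrderTheorem} to get that $<_T$ is a (strict) partial order on the edge set $E(T)$. Since $E(T)$ is finite, a standard result (topological sort / Szpilrajn) supplies a linear extension $\ell$ of $<_T$; list the edges in that order as $(t_1,\dots,t_{n-1})$. By Definition \ref{Defn-G} the geometric graph associated to this word is exactly $T$, so $\Gamma(w)=T$. It remains to check that $w\in F_n$.

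For the latter I would apply Theorem \ref{GouldenYongTheorem}: condition (1), that $\Gamma(w)$ is a non-crossing geometric tree, holds by assumption on $T$. For condition (2), suppose two edges containing a common vertex $a$ appear in $w$ as $t_i=(a,c)$ with $i<j$ and $t_j=(a,b)$. Then $(a,c)$ and $(a,b)$ are adjacent edges of $T$, so by Definition \ref{TreePreOrder} they are $\prec_T$-comparable, and hence $<_T$-comparable. Since $\ell$ extends $<_T$ and placed $t_i$ before $t_j$, we must have $(a,c)<_T(a,b)$, which by Definition \ref{TreePreOrder} means $b<_a c$, exactly the cyclically decreasing neighbour property. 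Hence $w\in F_n$, proving surjectivity.

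There is essentially no hard step here — the content has been absorbed into Theorem \ref{TreeOrderTheorem} and Remark \ref{ExtensionRemark}, which already observes that elements of $F_n$ are precisely the linear extensions of the posets $<_T$. The only small thing to be careful about is checking that condition (2) of Goulden–Yong corresponds exactly to $\prec_T$ on pairs of adjacent edges at the vertex $a$, which is a direct unpacking of Definition \ref{TreePreOrder}.
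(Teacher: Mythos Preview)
Your argument is correct and follows exactly the same route as the paper: take a linear extension of $<_T$ (via Theorem~\ref{TreeOrderTheorem}) and verify the Goulden--Yong conditions to conclude the resulting word lies in $F_n$. You have simply spelled out the verification of condition~(2) that the paper leaves implicit in its appeal to Remark~\ref{ExtensionRemark}.
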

\begin{proof}
Let  $T$ be a non-crossing geometric tree. By Theorem \ref{TreeOrderTheorem}, $<_T$ is always a partial order on the edges of $T$. Any linear extension of $<_T$ can be viewed as a word $w$ satisfying
the conditions of Theorem \ref{GouldenYongTheorem}, hence is a word in $F_n$. Obviously, $\Gamma(w) = T$.
\end{proof}

\begin{theorem}\label{Extended-Diameter}
The diameter of the extended Hurwitz graph $\mathcal{E}(S_n)$ is at least $\frac{3}{2}n-5$ and at most $2n - 4$.
\end{theorem}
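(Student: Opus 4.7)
The plan is to prove the two bounds separately, using the radius computation from Theorem \ref{Extended-Radius} for the upper bound and a projection argument into $\mathcal{G}_n$ for the lower bound.

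For the upper bound $2n-4$, I would simply invoke the triangle inequality together with Theorem \ref{Extended-Radius}. Fix any central vertex, for instance the star $\mathcal{S}_1$. For arbitrary $u, v \in F_n$ we have $\operatorname{dist}(u,v) \leq \operatorname{dist}(u, \mathcal{S}_1) + \operatorname{dist}(\mathcal{S}_1, v) \leq (n-2) + (n-2) = 2n-4$, since by Theorem \ref{Extended-Radius} every vertex of $\mathcal{E}(S_n)$ lies within distance $n-2$ of the star $\mathcal{S}_1$. Taking the maximum over $u,v$ gives the claimed upper bound.

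For the lower bound $\tfrac{3}{2}n-5$, the strategy is to pull back the known lower bound for $\mathcal{G}_n$ through the map $\Gamma$. By Theorem \ref{DiameterOfGeometricTreeGraph} there exist geometric trees $T_1, T_2$ with $\operatorname{dist}_{\mathcal{G}_n}(T_1, T_2) \geq \tfrac{3}{2}n - 5$. By Lemma \ref{ImageOfExtendedIsOnto}, $\Gamma$ is onto, so I can choose $w_1, w_2 \in F_n$ with $\Gamma(w_1) = T_1$ and $\Gamma(w_2) = T_2$. The key step is then the following observation: if $w_1 = u_0, u_1, \dots, u_k = w_2$ is any path in $\mathcal{E}(S_n)$, then by Lemma \ref{ImageOfExtendedInGnNeighbors} each pair $(\Gamma(u_{i-1}), \Gamma(u_i))$ is either an edge in $\mathcal{G}_n$ or a repetition. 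Deleting the repetitions yields a walk in $\mathcal{G}_n$ from $T_1$ to $T_2$ of length at most $k$, hence $k \geq \operatorname{dist}_{\mathcal{G}_n}(T_1, T_2) \geq \tfrac{3}{2}n - 5$. Taking $k = \operatorname{dist}_{\mathcal{E}(S_n)}(w_1, w_2)$ shows that the diameter of $\mathcal{E}(S_n)$ is at least $\tfrac{3}{2}n - 5$.

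Neither bound presents a serious obstacle, as both rely on already-established machinery; the only subtle point is the last argument, where one must note that a path in $\mathcal{E}(S_n)$ does not automatically project to a path in $\mathcal{G}_n$, only to a walk, but this is enough because walk length is an upper bound on the graph distance between its endpoints. I would state this carefully in the write-up to make sure the $\Gamma$-fibers causing repetitions are handled correctly.
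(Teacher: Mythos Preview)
Your proof is correct and follows essentially the same approach as the paper: the upper bound is twice the radius from Theorem \ref{Extended-Radius}, and the lower bound is obtained by lifting a diametral pair from $\mathcal{G}_n$ via Lemma \ref{ImageOfExtendedIsOnto} and projecting a shortest $\mathcal{E}(S_n)$-path through $\Gamma$ using Lemma \ref{ImageOfExtendedInGnNeighbors}. Your explicit handling of the walk-versus-path issue (repetitions in the $\Gamma$-image) is, if anything, slightly more careful than the paper's own write-up.
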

\begin{proof}
The upper bound $2n - 4$ is trivial, being twice the radius of $\mathcal{E}(S_n)$.
Now we show that the diameter of $\mathcal{E}\left(S_n\right)$ is at least the diameter of $\mathcal{G}_n$. 
Let $T$ and $S$ be geometric trees such that the distance between them in $\mathcal{G}_n$ is equal to the diameter of $\mathcal{G}_n$.
By Lemma \ref{ImageOfExtendedIsOnto} there are vertices $w$ and $u$ of $\mathcal{E}\left(S_n\right)$ such that $\Gamma(w) = T$ and $\Gamma(u) = S$. Let $P$ be a
shortest path from $w$ to $u$ in $\mathcal{E}\left(S_n\right)$. By Lemma \ref{ImageOfExtendedInGnNeighbors}, $\Gamma(P)$ is a walk from $T$ to $S$ in $\mathcal{G}_n$. Therefore the length of $P$ is greater or
equal to the distance between $T$ and $S$, which is equal to the diameter of $\mathcal{G}_n$. 
Hence we have the following inequality:
$$d(w,v) = length(P) \geq length\Gamma(P) \geq
d(T,S) = diam(\mathcal{G}_n) \geq \frac{3}{2}n - 5$$
which proves our claim.
\end{proof}

\newpage{}


\begin{thebibliography}{100}
\bibitem{Triangle-free}
R. M. Adin, Y. Roichman,
\textit{Triangle-free triangulations, hyperplane arrangements and shifted
tableaux,} Electron. J. Combin. 19 (2012), no. 3, Paper 32, 19 pp.


\bibitem{HurwitzGraph}
R. M. Adin and Y. Roichman, 
\textit{On maximal chains in the non-crossing partition lattice,}
J. Combin. Theory Ser. A 125 (2014), 18–46.

\bibitem{Autord-Dehornoy}
M. Autord and P. Dehornoy,
\textit{On the distance between the expressions of a permutation,} 
arXiv:0902.3074.

\bibitem{Bessis-Braid-Monoid}
D. Bessis, 
\textit{The dual braid monoid,}
Ann. Sci. École Norm. Sup. (4) 36 (2003), 647–683.

\bibitem{Denes-Rep}
J. Dénes,
\textit{The representation of a permutation as the product of a minimal number of transpositions and its connection 
with the theory of graphs,}
 Publ. Math. Inst. Hungar. Acad. Sci. 4 (1959), 63–71.

\bibitem{Elizade-Roichman}
S. Elizalde and Y. Roichman, 
\textit{Arc permutations,}  J. Algebraic Combin. 39 (2014), no. 2, 301–334.

\bibitem{GY} 
I. Goulden and A. Yong, 
\textit{Tree-like properties
of cycle factorization}, 
J.~Combin. Theory Ser.A 98 (2002), 106-117.

\bibitem{HHMM}
C. Hernando, F. Hurtado, A. Marquez, M. Mora and M. Noy, 
\textit{Geometric tree graphs points in convex position},
Discrete
Appl. Math. 93 (1999), 51-56.

\bibitem{Hurwitz}
A. Hurwitz, 
\textit{Ueber Riemann'sche Flächen mit gegebenen Verzweigungspunkten}, Math. Ann. 39 (1891), 1–61.

\bibitem{Kreweras}
G. Kreweras, \textit{Sur les partitions non croisées d’un cycle,} Discrete Math. 1 (1972), 333–350.

\bibitem{Reiner-Roichman}
V. Reiner and Y. Roichman, 
\textit{Diameter of reduced words,}
Trans. Amer. Math. Soc. 365 (2013), 2779–2802.

\bibitem{Strehl}
V. Strehl, 
\textit{Minimal transitive products of transpositions—the reconstruction of a proof of A. Hurwitz} Sém. Lothar, 
Combin. 37 (1996), Art. S37c, 12 pp. (electronic).
\end{thebibliography}
\end{document}